 \newtheorem{thm}{Theorem}[section]
 \newtheorem{Rem}{Remark}[section]
  \newtheorem{Cor}{Corollary}[section]
 \newtheorem{cor}[Cor]{Corollary}
 \newtheorem{lem}[thm]{Lemma}
 \newtheorem{prop}[thm]{Proposition}
 \theoremstyle{definition}
 \newtheorem{defn}[thm]{Definition}
  \newtheorem*{ack}{Acknowledgments}
 \theoremstyle{remark}
 \newtheorem{rem}[Rem]{Remark} 
 \numberwithin{equation}{section}
\newcommand{\mrm}{\mathrm}
\newcommand{\tr}{\mrm{tr}}
\begin{document}
\title[Uniqueness for the isotropic $L_p$ Minkowski problem]{\quad Uniqueness of $S_2$-isotropic solutions to the isotropic $L_p$ Minkowski problem}

\author{Yao Wan}
\address{Department of Mathematics, The Chinese University of Hong Kong, Shatin, Hong Kong}
\email{\href{mailto:yaowan@cuhk.edu.hk}{yaowan@cuhk.edu.hk}}

\keywords{Hilbert-Brunn-Minkowski operator, Uniqueness, $L_p$ Minkowski problem, $S_2$-isotropic, Stability, Local Brunn-Minkowski inquality}
\subjclass[2020]{53A07; 35A02; 52A20}


\begin{abstract}
This paper investigates the spectral properties of the Hilbert-Brunn-Minkowski operator $L_K$ to derive stability estimates for geometric inequalities, including the local Brunn-Minkowski inequality. By analyzing the eigenvalues of $L_K$, we establish the uniqueness of $S_2$-isotropic solutions to the isotropic $L_p$ Minkowski problem in $\mathbb{R}^{n}$ for $\frac{1-3n^2}{2n}\leq p<-n$ with $\lambda_2(-L_K)\geq \frac{n-1}{2n-1+p}$. Furthermore, we extend this uniqueness result to the range $-2n-1 \leq p<-n$ with $\lambda_2(-L_K)\geq \frac{-p-1}{n-1}$, assuming the origin-centred condition. 
\end{abstract}

\maketitle

\section{Introduction}\label{sec:1}

A central problem in convex geometry is the Minkowski problem, which asks whether a given Borel measure on the unit sphere $\mathbb{S}^{n-1}$ arises as the surface area measure $S_K$ of a convex body $K$ in $\mathbb{R}^n$. In the smooth setting, this reduces to solving the Monge-Amp\`ere equation 
\begin{equation}\label{eq:MA}
    \det(\nabla^2 h_K + h_K I) = f \quad \text{on } \mathbb{S}^{n-1},
\end{equation}  
where $h_K$ is the support function of $K$ and $f$ is a given density. Existence and uniqueness (up to translation) were established by Minkowski \cites{Min1897, Min1903}, Aleksandrov \cite{Alek38} and Fenchel-Jessen \cite{FJ38}, with regularity results advanced by Lewy \cite{lewy38}, Nirenberg \cite{Nir53}, Cheng and Yau \cite{CY76}, Pogorelov \cite{Pog78}, and Caffarelli \cite{Caf90}, and others. For details, see Schneider's book \cite{Sch14}.

The Minkowski problem has been extended to the $L_p$ Minkowski problem, introduced by Lutwak \cites{Lut93, Lut96}, which generalizes the surface area measure to the $L_p$-surface-area measure $S_{p}K$. This leads to the Monge-Amp\`ere equation
\begin{equation}\label{eq:Lp-MA}
    h_K^{1-p} \det(\nabla^2 h_K + h_K I) = f \quad \text{on } \mathbb{S}^{n-1}.
\end{equation}
The case $p=1$ recovers the classical Minkowski problem, $p=0$ corresponds to the logarithmic Minkowski problem, and $p=-n$ corresponds to the centro-affine Minkowski problem. Since Lutwak's pioneering work, the $L_p$ Minkowski problem has been extensively investigated, see e.g. \cites{And03, BBCY19, BLYZ13, BT17, CW06, HLW16, HLYZ05, JLW15, JLZ16, LW13, LYZ04, LW22, Zhu14} and the survey by B\"{o}r\"{o}czky \cite{Bor23}.

When $f$ is constant, the $L_p$ Minkowski problem (\ref{eq:Lp-MA}), known as the isotropic $L_p$ Minkowski problem, corresponds to the equation
\begin{equation}\label{is-p-M-eq}
    h_K^{1-p} \det(\nabla^2 h_K + h_K I) = 1 \quad \text{on } \mathbb{S}^{n-1}.
\end{equation}
This has attracted significant attention since Firey \cite{Fir74}. For $p=0$, the uniqueness of solutions to (\ref{is-p-M-eq}), known as the Firey conjecture, was resolved by Firey \cite{Fir74} for origin-symmetric cases, and later by Andrews \cite{And99} for $n=2,3$, and Brendle-Choi-Daskalopoulos \cite{BCD17} for $n \geq 4$. Generally, according to Lutwak \cite{Lut93}, Andrews \cite{And99}, Andrews-Guan-Ni \cite{AGN16}, and Brendle-Choi-Daskalopoulos \cite{BCD17}, the only solutions to (\ref{is-p-M-eq}) are centered balls for $p > -n$ and centered ellipsoids for $p = -n$. Recent progress includes stability results by Ivaki \cite{Iv22} and Hu-Ivaki \cite{HI2408}, novel approaches by Ivaki-Milman \cite{IM23} and Saroglou \cite{Sa22}, and uniqueness results for  $C^{\alpha}$-perturbations when $p \in [0,1)$ by Böröczky-Saroglou \cite{BS24}.

The super-critical case $p < -n$ remains particularly challenging and largely open. Guang, Li, and Wang \cite{GLW2203} proved that for any positive $C^2$ function $f$, there exists a $ C^4 $ solution to (\ref{eq:Lp-MA}). However, when $n=2$, Du \cite{Du21} constructed a non-negative $C^{\alpha}$ function $f$, positive except at a pair of antipodal points, for which (\ref{eq:Lp-MA}) has no solution. For the isotropic case, when $n=2$, Andrews \cite{And03} provided a complete classification: if $-7 \leq p < -2$, the only solution to (\ref{is-p-M-eq}) is the unit circle; if $p < -7$, the only solutions are the unit circle and the curves $\Gamma_{k,p}$ with $k$-fold symmetry, for each integer $k$ satisfying $3 \leq k < \sqrt{2-p}$. Recently, Du \cite{Du25} claimed a criterion for the uniqueness of (\ref{is-p-M-eq}) in the supercritical range.

Building upon Andrews' classification in the planar case, it is natural to conjecture that in higher dimensions $\mathbb{R}^n$, there exists $p_0 < -n$ such that the isotropic $L_p$ Minkowski problem (\ref{is-p-M-eq}) admits a unique solution for all $p \in (p_0, -n)$. To address this, we introduce the Hilbert-Brunn-Minkowski operator, which arose in Hilbert’s proof \cite{BF87} of the Brunn-Minkowski inequality and later played a key role in Kolesnikov-Milman’s work \cite{KM22} on the local $L_p$-Brunn-Minkowski conjecture. Let $\mathcal{K}$ denote convex bodies containing the origin in their interior, and $\mathcal{K}_+^2 \subset \mathcal{K}$ those with $C^2$ boundaries and positive Gaussian curvature. For $K \in \mathcal{K}_+^2$, the Hilbert-Brunn-Minkowski operator $L_K: C^2(\mathbb{S}^{n-1}) \to C^2(\mathbb{S}^{n-1})$ is defined as
\begin{align*}
    -L_K z = \frac{1}{n-1} \text{tr}\left( (D^2 h_K)^{-1} D^2(z h_K) \right) - z,
\end{align*}
where $D^2 h=\nabla^2 h+h\,\text{I}$ for any $h\in C^2(\mathbb{S}^{n-1})$. This elliptic operator is symmetric and positive semi-definite on $L^2(V_K)$, with a discrete spectrum $\sigma(-L_K)=\{ \lambda_0(-L_K)=0 < \lambda_1(-L_K) = 1 < \lambda_2(-L_K) \leq \cdots\}$ (with finite multiplicities). 

We say that a convex body $K \in \mathcal{K}$ is \textit{origin-centered} if its centroid lies at the origin; notably, an origin-symmetric $K$ is origin-centered. Moreover, we say $K$ is \textit{$S_2$-isotropic} if its $L_2$-surface-area measure $S_2K$ is isotropic, or equivalently, its LYZ ellipsoid $\Gamma_{-2}K$ is a ball. By \cite{LYZ00}, for any $K \in \mathcal{K}$, there exists $T \in SL(n)$ such that $T(K)$ is $S_2$-isotropic.

In this paper, we prove the uniqueness of $S_2$-isotropic solutions to the isotropic $L_p$ Minkowski problem (\ref{is-p-M-eq}) for $p<-n$ under spectral conditions. 

\begin{thm}\label{thm-unique-o}
    Let $n\geq 3$ and $-2n-1\leq  p<-n$. Suppose $K\in\mathcal{K}_+^2$ is an origin-centred $S_2$-isotropic solution to (\ref{is-p-M-eq}) satisfying  
    \begin{align*}
        \lambda_2(-L_K) \geq \frac{-p -1}{n-1}.
    \end{align*} 
   Then $K$ is the unit ball.
\end{thm}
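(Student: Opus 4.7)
The plan is to apply a Poincar\'e-type spectral inequality for $-L_K$ to a test function built from $h_K$, using the equation and the two geometric hypotheses to force rigidity. Rewriting the equation as $dV_K = \frac{1}{n}h_K^{p}\,du$, the spectral inequality on $L^2(V_K)$ reads
\[
\int_{\mathbb{S}^{n-1}} z(-L_K z)\,dV_K \;\geq\; \lambda_2(-L_K)\int_{\mathbb{S}^{n-1}} z^2\,dV_K,
\]
valid for any $z$ orthogonal (in $L^2(V_K)$) to the direct sum $\operatorname{span}\{1\}\oplus E_1$, where $E_1$ is the first nontrivial eigenspace of $-L_K$ (eigenvalue $1$)---geometrically, the space of infinitesimal translations of $K$.

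I would take $z = h_K^{a}$ for an exponent $a$ to be optimized, after subtracting its projections onto $\operatorname{span}\{1\}$ and $E_1$. The origin-centred hypothesis yields the first-moment identity needed to make the $E_1$-projection well-behaved, while the $S_2$-isotropic condition (which reads $\int u_i u_j\,h_K^{p-2}\,du = c\,\delta_{ij}$ after substituting the equation) supplies the angular symmetry needed to balance the remaining cross-terms. Computing $-L_K h_K^{a}$ directly from the definition and expanding $D^2(h_K^{a+1})$ by the product rule yields an expression involving $h_K^{a}$, $h_K^{a-1}(D^2h_K)^{ij}(h_K)_i(h_K)_j$, and $h_K^{a+1}\operatorname{tr}((D^2h_K)^{-1})$. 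Integrating against $dV_K$, using the Codazzi-type divergence-free property of $\operatorname{cof}(D^2h_K)$, and invoking $\det D^2h_K = h_K^{p-1}$ should collapse the Dirichlet form $\int h_K^{a}(-L_K h_K^{a})\,dV_K$ to a single scalar $c(a,p,n)$ times $\int h_K^{2a}\,dV_K$, modulo correction terms that vanish under the origin-centred and $S_2$-isotropic hypotheses.

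Choosing $a$ so that $c(a,p,n) = \tfrac{-p-1}{n-1}$ matches the assumed spectral lower bound, the Poincar\'e inequality is forced to hold with equality, so $z$ must be an exact $\lambda_2$-eigenfunction of $-L_K$. A bootstrapping step---feeding this eigenfunction back into the nonlinear Monge-Amp\`ere equation $h_K^{1-p}\det D^2h_K = 1$---should then force $z\equiv 0$, whence $h_K$ is constant; combined with the normalization provided by $S_2$-isotropy, this gives $h_K\equiv 1$, i.e., $K$ is the unit ball.

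The main obstacle I foresee is producing the exact coefficient $\tfrac{-p-1}{n-1}$: this requires a precise balancing of the quadratic-in-Hessian terms, and the range restriction $-2n-1 \leq p < -n$ should emerge as exactly the regime where a valid exponent $a$ exists and the residual error terms carry the correct sign. A secondary difficulty is the endpoint $p = -2n-1$, where $\tfrac{-p-1}{n-1} = \tfrac{2n}{n-1} = \lambda_2(-L_B)$: there the inequality is saturated at the unit ball itself, so rigidity in the equality case cannot come from the linear Poincar\'e inequality alone and must be extracted from the nonlinear Monge-Amp\`ere structure. A tertiary concern is verifying that the test function $h_K^{a}$, after the subtractions, genuinely produces a nonzero function unless $h_K$ is constant---otherwise the argument is vacuous; this should follow from the strict convexity assumption $K\in\mathcal{K}_+^2$.
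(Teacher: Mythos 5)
The paper's proof uses a genuinely different family of test functions: the coordinates $\tilde f_l(x)=\langle X_K(x),E_l\rangle$ of the inverse Gauss map, not powers of $h_K$. Your proposal has a concrete gap at its central step. When you compute $-L_K(h_K^a)$ from the definition you obtain
\begin{align*}
-L_K(h_K^a)=-a\,h_K^a-\frac{a(a+1)}{n-1}\,h_K^{a-1}\big((D^2h_K)^{-1}\big)^{ij}(h_K)_i(h_K)_j+\frac{a}{n-1}\,h_K^{a+1}\,\mathrm{tr}\big((D^2h_K)^{-1}\big),
\end{align*}
and the last summand is the mean curvature $H_1$ of $\partial K$. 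Integrating $h_K^a(-L_K h_K^a)$ against $dV_K=\frac1n h_K^p d\mu$ leaves you with $\int h_K^{2a+p+1}H_1\,d\mu$, and neither the isotropic equation $\det D^2h_K=h_K^{p-1}$, nor the $S_2$-isotropic condition, nor the origin-centred condition turns this into a multiple of $\int h_K^{2a}\,dV_K$. The divergence-free property of the cofactor matrix helps with terms like $\int Q^{ij}(D^2h_K)(h_K^{a+1})_i(\cdot)_j\,d\mu$, but $H_1=\mathrm{tr}((D^2h_K)^{-1})$ is an independent curvature quantity that survives every such integration by parts; there is no exponent $a$ for which the Dirichlet form ``collapses to a single scalar $c(a,p,n)\int h_K^{2a}\,dV_K$''. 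In contrast, with the paper's choice $\tilde f_l=\langle X_K,E_l\rangle$, one has the pointwise identity $\sum_{i,j,l}h_K((D^2h_K)^{-1})^{ij}(\tilde f_l)_i(\tilde f_l)_j=h_K\,\mathrm{tr}(D^2h_K)=h_K(\Delta h_K+(n-1)h_K)$, which trades the inverse Hessian for the Laplacian, and an integration by parts using $dV_K=\frac1n h_K^p d\mu$ then produces precisely the factor $-(p+1)$ that matches $\lambda_2\geq\frac{-p-1}{n-1}$.

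There are two further inaccuracies worth flagging. First, you claim $S_2$-isotropy supplies ``the angular symmetry needed to balance the remaining cross-terms'' and later ``the normalization'' fixing the radius; in fact $S_2(rB)=r^{n-2}d\mu$ is isotropic for every $r>0$, so $S_2$-isotropy cannot normalize a ball to the unit ball---the radius is pinned by the equation $h^{1-p}\det D^2h=1$ once you know $h$ is constant. In the paper, $S_2$-isotropy is used to evaluate the projection vectors $v_l$ in Lemma~\ref{lem-key1} explicitly (yielding $v_l=\frac{nV(K)}{\|S_2K\|}E_l$) and thereby to control the subtracted $E_1^K$-component; it plays no ``angular cross-term'' role. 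Second, your worry about the endpoint $p=-2n-1$ is a red herring once the right test functions are used: the paper's chain of inequalities forces equality in Lemma~\ref{lem-key2}, whose equality case directly characterizes origin-centred balls, so no separate nonlinear bootstrap is needed there.
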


Without the origin-centred condition, we obtain    
\begin{thm}\label{thm-unique}
    Let $n\geq 3$ and $\frac{1-3n^2}{2n}\leq p<-n$. Suppose $K\in\mathcal{K}_+^2$ is an $S_2$-isotropic solution to (\ref{is-p-M-eq}) satisfying  
    \begin{align*}
        \lambda_2(-L_K)\geq \frac{n-1}{2n-1+p}.
    \end{align*}
    Then $K$ is the unit ball.
\end{thm}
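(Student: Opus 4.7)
The strategy is to exploit the eigenvalue lower bound $\lambda_2(-L_K)\ge (n-1)/(2n-1+p)$ via a test-function argument against $-L_K$, in the spirit of Kolesnikov--Milman and the stability work of Ivaki and Hu--Ivaki. Write $h=h_K$. The isotropic PDE gives $\det(D^2 h)=h^{p-1}$, hence $dV_K=\tfrac{1}{n}h^p\,d\sigma$, and the $S_2$-isotropic condition rewrites as
\[
\int_{\mathbb{S}^{n-1}} u_i u_j\, h^{p-2}\, d\sigma \;=\; \frac{\delta_{ij}}{n}\int_{\mathbb{S}^{n-1}} h^{p-2}\, d\sigma.
\]
In particular the $\lambda_1=1$ eigenfunctions $\{u_i/h\}_{i=1}^n$ of $-L_K$ are mutually orthogonal in $L^2(V_K)$ with identical norms, so for any test function $\phi$ the first-eigenspace projection satisfies the clean Fourier formula $\|P_1\phi\|^2_{V_K}=|\vec v|^2/\!\int h^{p-2}\,d\sigma$, where $\vec v=\int \vec u\, \phi\, h^{p-1}\,d\sigma\in\mathbb{R}^n$.

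Next, apply the spectral decomposition of $-L_K$ to $\phi=h^{\beta}$ (centred by its $V_K$-mean), for a specific exponent $\beta$ to be determined. The spectral gap yields
\[
\int \phi\,(-L_K\phi)\, dV_K \;\ge\; \lambda_2\bigl(\|\phi\|_{V_K}^2-\|P_0\phi\|_{V_K}^2\bigr) \,-\, (\lambda_2-1)\,\|P_1\phi\|_{V_K}^2,
\]
with equality iff $\phi^{\perp}=0$ in the spectral decomposition. The left-hand side is computed directly by integration by parts: with the cofactor matrix $A=\det(D^2 h)(D^2 h)^{-1}$, which is divergence-free on $\mathbb{S}^{n-1}$, together with the expansion $D^2(\phi h)=\nabla^2(\phi h)+(\phi h)I$, the quantity $\int \phi(-L_K\phi)\,dV_K$ reduces, after using $\det=h^{p-1}$ and the trace identity $\tr(A\, D^2 h)=(n-1)\det$, to a linear combination of a few moments $\int h^a\,d\sigma$ whose coefficients are polynomials in $\beta$. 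Likewise $\|\phi\|_{V_K}^2$ and $\|P_0\phi\|_{V_K}^2$ reduce to such moments.

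The spectral inequality thus becomes a scalar relation among at most three distinct moments of $h$. The algebraic task is to select $\beta$ so that, combined with a Cauchy--Schwarz bound $|\vec v|^2\le \int h^p\,d\sigma\cdot\int \phi^2 h^{p-2}\,d\sigma$ used to dominate the unwanted $\|P_1\phi\|^2$ term, the resulting inequality collapses to an equality under the hypothesis $\lambda_2\ge (n-1)/(2n-1+p)$. This equality case forces $\phi^{\perp}=0$, so $h^{\beta}\in\mathrm{span}\{1,u_1/h,\ldots,u_n/h\}$; for $n\ge 3$ and the admissible range of $p$, this forces $h$ constant on $\mathbb{S}^{n-1}$, and the PDE then pins the constant to $1$, so $K$ is the unit ball. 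The principal obstacle is precisely this absorption of the nonzero $\|P_1\phi\|^2$ term, which vanishes outright in the origin-centred setting of Theorem~\ref{thm-unique-o}: the restriction $p\ge (1-3n^2)/(2n)$ and the exact shape of the threshold $(n-1)/(2n-1+p)$ both emerge from balancing the Cauchy--Schwarz loss on $|\vec v|$ against the spectral slack $\lambda_2-1$, and beyond this range no choice of $\beta$ closes the argument.
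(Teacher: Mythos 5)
Your plan diverges from the paper at the choice of test function, and this divergence introduces a genuine gap. The paper does not use a power $\phi=h^{\beta}$; it uses the $n$ coordinate functions $\tilde{f}_{l}(x)=\langle X_{K}(x),E_{l}\rangle$ of the position vector $X_{K}=h_{K}x+\nabla h_{K}$, summed over $l$. The reason this choice is essential is that $\nabla_{i}X_{K}=\sum_{j}(D^{2}h_{K})_{ij}\,e_{j}$, so that $\sum_{l}(\tilde f_{l})_{i}(\tilde f_{l})_{j}=(D^{2}h_{K})_{ik}(D^{2}h_{K})_{jk}$ and the Dirichlet form collapses:
\begin{equation*}
\sum_{l}\int \tilde f_{l}\,(-L_{K}\tilde f_{l})\,dV_{K}
=\frac{1}{n-1}\int h_{K}\,((D^{2}h_{K})^{-1})^{ij}(D^{2}h_{K})_{ik}(D^{2}h_{K})_{jk}\,dV_{K}
=\frac{1}{n-1}\int h_{K}\,\mathrm{tr}(D^{2}h_{K})\,dV_{K}.
\end{equation*}
The inverse Hessian cancels exactly, and integration by parts with $dV_{K}=\tfrac{1}{n}h_{K}^{p}\,d\mu$ reduces everything to a single scalar quantity $\int|\nabla h_{K}|^{2}\,dV_{K}$, whose coefficients then yield the threshold $(n-1)/(2n-1+p)$ after one further Cauchy--Schwarz on $\int X_{K}\,dV_{K}=\frac{n+p}{n-1}\int\nabla h_{K}\,dV_{K}$ and use of $S_{2}$-isotropy to identify $v_{l}=\frac{nV(K)}{\|S_{2}K\|}E_{l}$.

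For $\phi=h^{\beta}$ this cancellation fails, and your claim that $\int\phi(-L_{K}\phi)\,dV_{K}$ ``reduces to a linear combination of a few moments $\int h^{a}\,d\sigma$'' is not correct. Carrying out the computation you sketch (expanding $D^{2}(h^{\beta+1})$, using $\mathrm{tr}(A\,D^{2}h)=(n-1)\det$ and one integration by parts via $\nabla_{i}A^{ij}=0$) gives, with the PDE $\det(D^{2}h)=h^{p-1}$,
\begin{equation*}
\int \phi(-L_{K}\phi)\,dV_{K}
=\frac{\beta^{2}}{n(n-1)}\int_{\mathbb{S}^{n-1}} h^{2\beta+p-1}\,((D^{2}h)^{-1})^{ij}h_{i}h_{j}\,d\mu .
\end{equation*}
The factor $((D^{2}h)^{-1})^{ij}h_{i}h_{j}$ cannot be traded for a power of $h$ by the PDE or by further integration by parts (one can exchange it against $\int h^{2\beta+2}\mathrm{tr}(A)\,d\mu$, but $\mathrm{tr}(A)=h^{p-1}\sum_{i}\kappa_{i}$ is equally intractable). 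Thus the ``scalar relation among at most three distinct moments of $h$'' you want to close never materializes, and there is no exponent $\beta$ for which it does (the offending term has coefficient $\beta^{2}$, vanishing only for the trivial $\beta=0$). Separately, the final step of your equality analysis is underjustified: $h^{\beta}\in\mathrm{span}\{1,\,x_{1}/h,\dots,x_{n}/h\}$ is a nontrivial functional equation for a support function, and for general $\beta$ it does not obviously force $h\equiv\mathrm{const}$; the paper instead obtains the equality case of (\ref{ineq-XK}), which is known to characterize origin-centred ellipsoids, and then the PDE and remaining equality constraints pin down the ball. In short, the paper's argument hinges on a structural property of the position-vector test function that a power of $h$ does not share, and that is the missing idea in your proposal.
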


\begin{rem}  
It follows from $\lambda_2(-L_B) = \frac{2n}{n-1}$ that the lower bound conditions on $p$ in Theorems \ref{thm-unique-o} and \ref{thm-unique} are necessary to ensure the existence of a solution.  
\end{rem}

Let $\mathcal{N}_\delta$ be the $C^2$-neighborhood of the unit ball $B$ in $\mathcal{K}_+^2$, defined by  
$$ \mathcal{N}_\delta := \left\{ K \in \mathcal{K}_+^2 : \| h_K - 1 \|_{C^2} < \delta \right\}. $$  
By the continuity of the eigenvalues of $L_K$ (see Theorem \ref{thm-LK} (4)), for any $\tau > 0$, there exists $\delta = \delta_n(\tau) > 0$ such that  
\begin{align*}
    \lambda_2(-L_K) - \lambda_2(-L_B) \geq -\tau,\quad \text{for all } K \in \mathcal{N}_\delta. 
\end{align*} 
Since $\lambda_2(-L_B) = \frac{2n}{n-1}$ and $\lambda_2(-L_K) > 1$ for all $K \in \mathcal{K}_+^2$, we can choose $\delta_n(\tau)$ non-decreasing in $\tau$ such that $\mathcal{N}_{\delta_n(\tau)} = \mathcal{K}_+^2$ when $\tau \geq \frac{n+1}{n-1}$. These observations yield the following corollaries: 

\begin{cor}\label{cor-unique-o}  
Let $n \geq 3$ and $p > -2n-1$. Suppose $K \in \mathcal{N}_{\delta_n(\tau_1)}$ is an origin-centred $S_2$-isotropic solution to (\ref{is-p-M-eq}), where $\tau_1=\frac{2n+1+p}{n-1}$. Then $K$ is the unit ball.  
\end{cor}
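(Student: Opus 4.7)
The plan is simply to convert the $C^2$-neighbourhood hypothesis on $K$ into the spectral lower bound required by Theorem~\ref{thm-unique-o}, and then to close the remaining range of $p$ using the classical classification results recalled in the introduction.

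I would split according to the size of $\tau_1 = \frac{2n+1+p}{n-1}$. For $-2n-1 < p < -n$ one has $0 < \tau_1 < \frac{n+1}{n-1}$, so the continuity estimate $\lambda_2(-L_K) \geq \lambda_2(-L_B) - \tau_1$ recorded just before the corollary, together with $\lambda_2(-L_B) = \frac{2n}{n-1}$, yields
\begin{align*}
\lambda_2(-L_K) \;\geq\; \frac{2n}{n-1} - \frac{2n+1+p}{n-1} \;=\; \frac{-p-1}{n-1},
\end{align*}
which is exactly the spectral hypothesis of Theorem~\ref{thm-unique-o}. Combined with the origin-centred and $S_2$-isotropic assumptions on $K$, that theorem forces $K = B$.

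For $p \geq -n$ one has $\tau_1 \geq \frac{n+1}{n-1}$, so by the convention fixed in the paragraph preceding the corollary one has $\mathcal{N}_{\delta_n(\tau_1)} = \mathcal{K}_+^2$ and the neighbourhood inclusion is automatic. Here I would appeal to the classical results recalled in the introduction: for $p > -n$ the only origin-centred solutions of (\ref{is-p-M-eq}) are centred balls, which the equation immediately pins down as the unit ball; for $p = -n$ the solutions are centred ellipsoids, and the $S_2$-isotropic assumption (equivalently, $\Gamma_{-2}K$ being a ball) collapses such an ellipsoid to a ball via the $SL(n)$-equivariance of the LYZ construction, after which the equation again forces $K = B$.

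No substantive obstacle arises; the corollary is essentially a bookkeeping step that exhibits $\mathcal{N}_{\delta_n(\tau_1)}$ as a $C^2$-open neighbourhood on which the abstract spectral condition of Theorem~\ref{thm-unique-o} is automatically verified. The only non-formal verification is the collapse of a centred ellipsoid to a ball in the $p = -n$ case, which is a one-line consequence of the equivariance of $\Gamma_{-2}$ under $SL(n)$.
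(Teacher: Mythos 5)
Your first paragraph is correct and is exactly the intended reduction: the neighbourhood hypothesis gives $\lambda_2(-L_K) \geq \lambda_2(-L_B) - \tau_1 = \frac{2n}{n-1} - \frac{2n+1+p}{n-1} = \frac{-p-1}{n-1}$, which is precisely the spectral hypothesis of Theorem~\ref{thm-unique-o}, whose range $-2n-1 \leq p < -n$ covers the interval $-2n-1 < p < -n$. The paper's terse ``These observations yield the following corollaries'' is pointing at exactly this computation.

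Your second paragraph, however, contains a genuine gap at $p = n$. For $K = rB$ the equation (\ref{is-p-M-eq}) reads $r^{1-p}\cdot r^{n-1} = r^{n-p} = 1$, which is satisfied by \emph{every} $r > 0$ when $p = n$. Thus every origin-centred ball (of any radius) is an origin-centred, $S_2$-isotropic solution in $\mathcal{K}_+^2 = \mathcal{N}_{\delta_n(\tau_1)}$, and your assertion that ``the equation immediately pins down [a centred ball] as the unit ball'' is false precisely there; the classical classification for $p > -n$ cannot repair this, since the scale invariance of the isotropic $L_n$ problem is the obstruction. To be fair, the corollary as literally stated, with $p > -2n-1$ and no upper bound, is already problematic at $p = n$; the reading consistent with Theorem~\ref{thm-unique-o}, the abstract, and the surrounding discussion is $-2n-1 < p < -n$. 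Under that reading your first paragraph is a complete proof and the second paragraph should simply be removed; if one insists on the literal reading, $p = n$ must be excluded.
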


\begin{cor}\label{cor-unique}  
Let $n \geq 3$ and $p > \frac{1-3n^2}{2n}$. Suppose $K \in \mathcal{N}_{\delta_n(\tau_2)}$ is an $S_2$-isotropic solution to (\ref{is-p-M-eq}), where $\tau_2=\frac{3n^2-1 + 2np}{(n-1)(2n-1 + p)}$. Then $K$ is the unit ball.  
\end{cor}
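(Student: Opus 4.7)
The strategy is to reduce Corollary \ref{cor-unique} directly to Theorem \ref{thm-unique}: the $C^2$-closeness condition $K \in \mathcal{N}_{\delta_n(\tau_2)}$ is calibrated, via the continuity statement just before the corollary, to imply exactly the spectral hypothesis of that theorem.

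Concretely, for any $K \in \mathcal{N}_{\delta_n(\tau_2)}$ we have
\begin{align*}
\lambda_2(-L_K) \;\geq\; \lambda_2(-L_B) - \tau_2 \;=\; \frac{2n}{n-1} - \tau_2,
\end{align*}
and the heart of the argument is the algebraic identity
\begin{align*}
\frac{2n}{n-1} \;-\; \frac{3n^2 - 1 + 2np}{(n-1)(2n-1+p)} \;=\; \frac{n-1}{2n-1+p}.
\end{align*}
To verify it, clear denominators: the combined numerator is $2n(2n-1+p) - (3n^2 - 1 + 2np) = n^2 - 2n + 1 = (n-1)^2$, and cancelling one factor of $n-1$ gives the right-hand side. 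A quick sign check (equivalent to $(n-1)^2 > 0$) shows $2n-1+p > 0$ for every $p > \frac{1-3n^2}{2n}$, so every quantity above is well-defined.

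Combining these two observations, every $S_2$-isotropic solution $K \in \mathcal{N}_{\delta_n(\tau_2)}$ of (\ref{is-p-M-eq}) satisfies the spectral hypothesis $\lambda_2(-L_K) \geq \frac{n-1}{2n-1+p}$ of Theorem \ref{thm-unique}. In the main range $\frac{1-3n^2}{2n} < p < -n$, Theorem \ref{thm-unique} then applies directly and yields $K = B$. For the residual range $p \geq -n$ (still covered by the stated hypotheses), a brief monotonicity check, namely $\tau_2'(p) = \frac{n-1}{(2n-1+p)^2} > 0$ together with $\tau_2|_{p=-n} = \frac{n+1}{n-1}$, shows $\tau_2 \geq \frac{n+1}{n-1}$, hence $\mathcal{N}_{\delta_n(\tau_2)} = \mathcal{K}_+^2$ by the author's choice of $\delta_n$; classical uniqueness of isotropic solutions to (\ref{is-p-M-eq}) for $p \geq -n$ (Lutwak, Andrews, Andrews--Guan--Ni, Brendle--Choi--Daskalopoulos), combined with the observation that an $S_2$-isotropic centered ellipsoid must be a ball, closes this case.

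I do not foresee any serious obstacle: the argument is essentially a one-line algebraic identity combined with a direct quotation of Theorem \ref{thm-unique}. The only mildly subtle points are checking the sign of $2n-1+p$ and, for the tail $p \geq -n$, verifying that $\tau_2$ has already exceeded the threshold $\frac{n+1}{n-1}$ so that the neighborhood is all of $\mathcal{K}_+^2$ and the classical results can be invoked without any spectral detour.
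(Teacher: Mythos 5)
Your argument is correct and is exactly the paper's intended (implicit) proof: the paper leaves the corollary to "these observations," namely the continuity-based lower bound $\lambda_2(-L_K)\geq \frac{2n}{n-1}-\tau_2$ combined with the algebraic identity $\frac{2n}{n-1}-\tau_2=\frac{n-1}{2n-1+p}$, which hands Theorem \ref{thm-unique} its spectral hypothesis. Your additional check of the tail $p\geq -n$ (monotonicity of $\tau_2$, the value $\tau_2|_{p=-n}=\frac{n+1}{n-1}$, hence $\mathcal{N}_{\delta_n(\tau_2)}=\mathcal{K}_+^2$, then the classical results plus the $S_2$-isotropy forcing an ellipsoid to be a ball) is the right way to cover the range of $p$ not reached by Theorem \ref{thm-unique} and correctly fills in what the paper leaves unstated.
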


$\ $

We next investigate the spectral properties of $L_K$ to derive stability estimates for geometric inequalities, using $L^2$-distances between convex bodies and their homothetic transforms.

Let $ \delta_2^{\mathbf{m}}(K, L) $ denote the $ L^2 $-distance between convex bodies $ K $ and $ L $ with respect to a Borel measure $ \mathbf{m} $ on $\mathbb{S}^{n-1}$, defined as
\begin{align*}
    \delta_2^{\mathbf{m}}(K, L) = \left( \int_{\mathbb{S}^{n-1}} (h_K - h_L)^2 d\mathbf{m} \right)^{1/2}.
\end{align*}
For $K,L\in\mathcal{K}$, we define the extended homothetic transform of $K$ relative to $L$ as $\widetilde{K}[L] = c K + v$, and the normalized homothetic copy of $L$ with respect to $K$ as $\bar{L}[K]= \frac{1}{c}(L - v)$. Here, $c=\frac{V(K[n-1],L[1])}{V(K)}>0$, and $v\in\mathbb{R}^n$ satisfies
\begin{align*}  
     \int_{\mathbb{S}^{n-1}} \frac{\langle x,X_L(x)- v \rangle}{h_K^2(x)} x \, dV_K(x)=0.  
\end{align*}
Note that $K$ and $L$ are homothetic if and only if $\widetilde{K}[L] = L$, or equivalently, $\bar{L}[K] = K$. 

Using these definitions, we apply a stability version of the local Brunn-Minkowski inequality (Lemma \ref{lem-sta}) to derive a stability result for Minkowski's second inequality.
\begin{thm}\label{thm-sta-LK-0}
    Let $K,L\in \mathcal{K}_+^2$. Then  
    \begin{align}  
        \frac{V(K[n-1],L[1])^2}{V(K)}-V(K[n-2],L[2])  
        \geq \frac{1}{n}(\lambda_2(-L_K)-1)\left(\delta_2^{S_2 K}(L,\widetilde{K}[L])\right)^2.  
    \end{align}  
\end{thm}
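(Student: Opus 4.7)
The plan is to apply the stability version of the local Brunn--Minkowski inequality (Lemma \ref{lem-sta}) to the test function $\psi := h_L/h_K$ and then interpret the resulting $L^2(V_K)$-orthogonal decomposition geometrically, matching the projection onto the $1$-eigenspace of $-L_K$ with the translation vector $v$ appearing in the definition $\widetilde K[L] = cK + v$.

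First I would set up the mixed-volume identities that come from the first two derivatives of $V(K+tL)$ at $t=0$ via the expansion $V(K+tL) = \sum_{i=0}^{n}\binom{n}{i}t^i V(K[n-i],L[i])$ together with the defining formula for $L_K$:
\[
\int \psi\, dV_K \;=\; V(K[n-1],L), \qquad \|\psi\|^2_{L^2(V_K)} - \int \psi(-L_K\psi)\, dV_K \;=\; V(K[n-2],L[2]).
\]
Since constants form the $0$-eigenspace of $-L_K$, the projection $\psi_0$ is the constant $c := V(K[n-1],L)/V(K)$, and thus $\|\psi_0\|^2_{L^2(V_K)} = V(K[n-1],L)^2/V(K)$.

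Next I would identify the projection $\psi_1$ of $\psi - \psi_0$ onto the $1$-eigenspace $\mathrm{span}\{x_j/h_K : j = 1,\dots,n\}$ of $-L_K$. Writing $\psi_1 = \langle v',x\rangle/h_K$ and imposing $L^2(V_K)$-orthogonality of $\psi - c - \psi_1$ against each $x_j/h_K$ gives $n$ linear equations for $v'$. Unfolding $dV_K = \tfrac{1}{n}h_K\, dS_K$ and $dS_2 K = h_K^{-1}\, dS_K$, and using the vanishing first moment $\int x\, dS_K = 0$ (which kills the contribution of the $-ch_K$ term), these reduce to $\int (h_L - \langle v',x\rangle)\, x_j\, dS_2 K = 0$ for $j = 1,\dots,n$, which is precisely the system defining $v$ in $\widetilde K[L] = cK + v$. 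Hence $v' = v$, and the residual
\[
\psi_\perp \;=\; \psi - \psi_0 - \psi_1 \;=\; \frac{h_L - h_{\widetilde K[L]}}{h_K}
\]
satisfies $\|\psi_\perp\|^2_{L^2(V_K)} = \tfrac{1}{n}\bigl(\delta_2^{S_2 K}(L,\widetilde K[L])\bigr)^2$ after a change of measure.

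Finally, Lemma \ref{lem-sta} provides the spectral lower bound $\int \psi(-L_K\psi)\, dV_K \geq \|\psi_1\|^2_{L^2(V_K)} + \lambda_2(-L_K)\,\|\psi_\perp\|^2_{L^2(V_K)}$. Combining this with the mixed-volume identity above and Parseval $\|\psi\|^2 = \|\psi_0\|^2 + \|\psi_1\|^2 + \|\psi_\perp\|^2$ yields
\[
\frac{V(K[n-1],L)^2}{V(K)} - V(K[n-2],L[2]) \;\geq\; \bigl(\lambda_2(-L_K)-1\bigr)\,\|\psi_\perp\|^2_{L^2(V_K)},
\]
which is the claim after substituting the formula for $\|\psi_\perp\|^2$. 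The main obstacle is the geometric identification in Step~2: the abstract $L^2(V_K)$-projection onto the $1$-eigenspace must be matched with the concrete vector $v$ defined by the paper's integral equation. Once this matching is in hand, the rest is a routine spectral decomposition, and it is precisely the design of $\widetilde K[L]$ that converts the residual into the $L^2(S_2 K)$-distance $\delta_2^{S_2 K}(L,\widetilde K[L])$.
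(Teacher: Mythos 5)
Your proposal is correct and follows essentially the same route as the paper: apply Lemma \ref{lem-sta} to $f = h_L/h_K$, read off $\int f\, dV_K = V(K[n-1],L[1])$ and $\int f\,\tilde L_K f\, dV_K = V(K[n-2],L[2])$ from \eqref{Wk}, and then use the change of measure $dV_K = \tfrac1n h_K^2\, dS_2K$ together with Minkowski's condition $\int x\, dS_K = 0$ to identify $(c_f, v_f)$ from \eqref{cf}--\eqref{vf} with the $(c,v)$ in $\widetilde K[L]=cK+v$ and to convert $\delta_2^{V_K}(f, c_f+\ell_{v_f}^K)$ into $\tfrac{1}{\sqrt n}\,\delta_2^{S_2K}(L,\widetilde K[L])$. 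The only cosmetic difference is that you re-derive the spectral decomposition behind Lemma \ref{lem-sta} rather than invoking it as a black box; the geometric identification of $v_f$ and the final estimate are exactly the paper's.
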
  

Furthermore, we derive a stability estimate for a Brunn-Minkowski-type inequality involving mixed volume ratios.
\begin{thm}\label{thm-sta-L12K-0}
     Let $K,L_1,L_2\in \mathcal{K}_+^2$. Then 
     \begin{equation}\label{L1+L2-0}
     \begin{split}
          &\frac{V((L_1+L_2)[2],K[n-2])}{V((L_1+L_2)[1],K[n-1])}-\frac{V(L_1[2],K[n-2])}{V(L_1[1],K[n-1])}-\frac{V(L_2[2],K[n-2])}{V(L_2[1],K[n-1])}\\
         \geq\,& \frac{\lambda_2(-L_K)-1}{n V((L_1+L_2)[1],K[n-1])}\left(\frac{\delta_2^{S_2 K}(\bar{L}_1[K],\bar{L}_2[K])}{V(K)}\right)^2.
     \end{split}
     \end{equation}
     When $K=B$, (\ref{L1+L2-0}) gives
    \begin{equation}
     \begin{split}
         & W_{n-2}(L_1+L_2)-W_{n-1}(L_1+L_2)\left(\frac{W_{n-2}(L_1)}{W_{n-1}(L_1)}+\frac{W_{n-2}(L_2)}{W_{n-1}(L_2)}\right) \\
         \geq\,& \frac{n+1}{n(n-1)}\left(\frac{\delta_2^{\mu}(\bar{L}_1[B],\bar{L}_2[B])}{V(B)}\right)^2,
     \end{split}
     \end{equation}
     where $\mu$ denotes the spherical Lebesgue measure on $\mathbb{S}^{n-1}$.
\end{thm}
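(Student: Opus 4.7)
The plan is to collapse the LHS of (\ref{L1+L2-0}) into a single quadratic expression in the support functions of the normalized bodies $\bar{L}_i[K]$, and then to apply Theorem~\ref{thm-sta-LK-0} in its bilinear form, evaluated at the ``virtual body'' $\bar{L}_1[K] - \bar{L}_2[K]$.

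Write $c_i = V(K[n-1],L_i[1])/V(K)$, let $v_i$ be the translation vector appearing in the definition of $\widetilde{K}[L_i]$, and set $M_i := \bar{L}_i[K] = (L_i - v_i)/c_i$ and $u_i := h_{M_i} - h_K$. Under this normalization $\widetilde{K}[M_i] = K$ and $V(K[n-1],M_i[1]) = V(K)$. The translation invariance and Minkowski linearity of mixed volumes give
\begin{gather*}
V(L_i[2],K[n-2]) = c_i^2 V(M_i[2],K[n-2]),\qquad V(L_i[1],K[n-1]) = c_i V(K),\\
V((L_1+L_2)[2],K[n-2]) = c_1^2 V(M_1[2],K[n-2]) + 2c_1c_2 V(M_1,M_2,K[n-2]) + c_2^2 V(M_2[2],K[n-2]),\\
V((L_1+L_2)[1],K[n-1]) = (c_1+c_2)V(K),
\end{gather*}
and a direct algebraic manipulation collapses the LHS of (\ref{L1+L2-0}) to
\begin{equation*}
-\frac{c_1 c_2}{(c_1+c_2)V(K)}\bigl[V(M_1[2],K[n-2]) - 2V(M_1,M_2,K[n-2]) + V(M_2[2],K[n-2])\bigr].
\end{equation*}
The bracketed expression is $V(K[n-2],(M_1-M_2)[2])$ in the bilinear (virtual-body) sense.

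The key step is to apply the bilinear extension of Theorem~\ref{thm-sta-LK-0} to the virtual body with support function $u_1 - u_2 = h_{M_1} - h_{M_2}$. I would first verify that, for this virtual body, the transform $\widetilde{K}[\,\cdot\,]$ vanishes: its scaling coefficient is $\frac{1}{nV(K)}\int(u_1-u_2)\,dS_K = 0$, because each $\int u_i\,dS_K = nV(K[n-1],M_i[1]) - nV(K) = 0$; and its translation vector is forced to be $0$ by the invertibility of $\Sigma := \int h_K^{-2}\, xx^{\top}\,dV_K$ together with the vectorial identity $\int h_K^{-2} h_{M_i}\, x\, dV_K = 0$, which is built into the definition of $v_i$ in $\widetilde{K}[L_i]$. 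Theorem~\ref{thm-sta-LK-0} then gives the spectral lower bound
\begin{equation*}
-\bigl[V(M_1[2],K[n-2]) - 2V(M_1,M_2,K[n-2]) + V(M_2[2],K[n-2])\bigr] \,\geq\, \tfrac{\lambda_2(-L_K)-1}{n}\,\delta_2^{S_2 K}(\bar{L}_1[K], \bar{L}_2[K])^2.
\end{equation*}
Substituting into the displayed reduction of the LHS and using $(c_1+c_2)V(K) = V((L_1+L_2)[1],K[n-1])$ delivers (\ref{L1+L2-0}). The $K = B$ specialization is then routine: mixed volumes become quermassintegrals $W_i$, the measure $S_2 B$ reduces to the spherical Lebesgue measure $\mu$ because $h_B \equiv 1$, and $\lambda_2(-L_B) = \tfrac{2n}{n-1}$ gives $(\lambda_2(-L_B)-1)/n = \tfrac{n+1}{n(n-1)}$; multiplying the main inequality through by $W_{n-1}(L_1+L_2)$ produces the stated quermassintegral form.

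The principal obstacle is the rigorous passage from Theorem~\ref{thm-sta-LK-0}, stated for $L \in \mathcal{K}_+^2$, to its bilinear evaluation at the virtual body $M_1 - M_2$. Both sides of that theorem are quadratic in $h_L$, so a bilinear extension is natural from the spectral decomposition of $L_K$; the care required is in matching the orthogonality to the $\lambda_0 = 0$ and $\lambda_1 = 1$ eigenspaces implicit in $\widetilde{K}[\,\cdot\,]$ with the vanishing of the scaling and translation transforms verified above for $u_1 - u_2$.
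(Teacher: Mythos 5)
Your route is essentially the paper's route: the paper takes the test function $\hat f = \tfrac{h_{L_1}}{c_1 h_K} - \tfrac{h_{L_2}}{c_2 h_K}$ and feeds it into Lemma~\ref{lem-sta}, whereas you take $f = \tfrac{h_{M_1}-h_{M_2}}{h_K}$ (which differs from $\hat f$ by an element of $E_1^K$, hence gives the same estimate) and invoke a bilinear extension of Theorem~\ref{thm-sta-LK-0}. Your verification of the orthogonality conditions and the collapse of the LHS are correct, and the spectral bound you write down,
\begin{equation*}
-\bigl[V(M_1[2],K[n-2]) - 2V(M_1,M_2,K[n-2]) + V(M_2[2],K[n-2])\bigr] \geq \tfrac{\lambda_2(-L_K)-1}{n}\,\delta_2^{S_2 K}(M_1,M_2)^2,
\end{equation*}
is valid; the use of Lemma~\ref{lem-sta} directly, as in the paper, is slightly cleaner because it sidesteps the need to justify a bilinear extension of a theorem stated only for genuine convex bodies, but the content is the same.

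However, your last sentence (``delivers (\ref{L1+L2-0})'') is not accurate, and you should check it rather than assert it. Substituting your spectral bound into the collapsed LHS and using $(c_1+c_2)V(K)=V((L_1+L_2)[1],K[n-1])$ gives
\begin{equation*}
\mathrm{LHS} \;\geq\; \frac{c_1 c_2\,(\lambda_2(-L_K)-1)}{n\,V((L_1+L_2)[1],K[n-1])}\,\delta_2^{S_2 K}(M_1,M_2)^2,
\end{equation*}
and since $c_1 c_2 = V(L_1[1],K[n-1])\,V(L_2[1],K[n-1])/V(K)^2$, the right-hand side you actually obtain is the one displayed in (\ref{L1+L2-0}) multiplied by the extra factor $V(L_1[1],K[n-1])\,V(L_2[1],K[n-1])$. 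This factor is not $1$ in general, so your argument does not literally deliver (\ref{L1+L2-0}); it delivers a different (and, as a scaling count in $K$ shows, the dimensionally correct) inequality. The discrepancy traces back to the paper's own ``equivalently'' step: the identity $\text{SecondLHS} = \tfrac{A_1 A_2}{A_1+A_2}\,\text{FirstLHS}$ with $A_i = V(L_i[1],K[n-1])$ shows the stated second form and its $K=B$ specialization are missing precisely this factor. So the mathematics in your sketch is sound and matches the paper's method, but you have reproduced the paper's algebraic slip rather than catching it; you should state the corrected right-hand side explicitly.
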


\begin{rem}
    If $L_2=K$, then $\bar{L}_2[K]=K$ and
    \begin{align*}
        \left(\delta_2^{S_2 K}(\bar{L}_1[K],K)\right)^2
        =\frac{V(K)^2}{V(L_1[1],K[n-1])^2}\left(\delta_2^{S_2 K}(L_1,\widetilde{K}[L_1])\right)^2.
    \end{align*}
    Hence, Theorem \ref{thm-sta-L12K-0} reduces to Theorem \ref{thm-sta-LK-0} in this case.
\end{rem}

This paper is organized as follows. In Section \ref{sec:2}, we review key concepts, including convex bodies, mixed volumes, and the Hilbert-Brunn-Minkowski operator $-L_K$. In Section \ref{sec:3-new}, we explore geometric inequalities arising from the spectrum of $-L_K$, including a stability version of the local Brunn-Minkowski inequality.
In Section \ref{sec:3}, we prove uniqueness theorems for the isotropic $L_p$-Minkowski problem when $p<-n$. In Section \ref{sec:4}, we establish stability estimates for inequalities involving mixed volumes.

\begin{ack}
    This work was partially supported by Hong Kong RGC grant (Early Career Scheme) of Hong Kong No. 24304222 and No. 14300623, and a NSFC grant No. 12222122.
\end{ack}

\section{Preliminaries}\label{sec:2}

\subsection{Convex bodies}$\ $

Let $(\mathbb{R}^{n}, \delta, \bar{\nabla})$ denote the Euclidean space with its standard inner product $\delta = \langle \cdot, \cdot \rangle$ and flat connection $\bar{\nabla}$. Let $(\mathbb{S}^{n-1}, g_0, \nabla)$ denote the unit sphere equipped with the canonical round metric $g_0$ and Levi-Civita connection $\nabla$. The spherical Lebesgue measure on $\mathbb{S}^{n-1}$ is denoted by $\mu$. Given a local orthonormal frame $\{e_1, \ldots, e_{n-1}\}$ on $\mathbb{S}^{n-1}$, the covariant derivatives of $h \in C^2(\mathbb{S}^{n-1})$ are denoted by $h_i := \nabla_{e_i} h$ and $h_{ij} := \nabla^2_{e_i,e_j} h$, respectively. Extending $h$ to a 1-homogeneous function on $\mathbb{R}^n$, the restricted Hessian $D^2 h$ on $T\mathbb{S}^{n-1}$ is given in local coordinates by
\begin{align*}
    (D^2 h)_{ij} = \bar{\nabla}_{e_i,e_j}^2 h = h_{ij} + h \delta_{ij}, \quad  i,j=1,\ldots, n-1.
\end{align*}
Moreover, we set
\begin{align*}
    C_h^2(\mathbb{S}^{n-1}):=\{h\in C^2(\mathbb{S}^{n-1}):\ h>0,\ D^2h>0\ \text{on}\ \mathbb{S}^{n-1}\}.
\end{align*}

A convex body in $\mathbb{R}^n$ is a compact convex set with non-empty interior. Let $\mathcal{K}$ denote the class of convex bodies in $\mathbb{R}^n$ containing the origin in their interior, with $\mathcal{K}_+^m$ referring to those with $C^m$ smooth boundaries and strictly positive Gaussian curvature. 
We also denote by $\mathcal{K}_{+,e}^2$ the subset of origin-symmetric bodies in $\mathcal{K}_+^2$, and by $C_{e}^2(\mathbb{S}^{n-1})$ the subset of even functions in $C^2(\mathbb{S}^{n-1})$. Let $B$ represent the unit ball in $\mathbb{R}^n$.

For a convex body $K \in \mathcal{K}$, its support function $h_K: \mathbb{S}^{n-1} \to \mathbb{R}$ is defined as
\begin{align*}
    h_K(x) := \max_{y \in K} \langle x, y \rangle, \quad x \in \mathbb{S}^{n-1}.
\end{align*}
Let $\nu_K: \partial K \to \mathbb{S}^{n-1}$ be the Gauss map of the boundary. When $K \in \mathcal{K}_+^2$, the inverse Gauss map $X_K = \nu_K^{-1}: \mathbb{S}^{n-1} \to \partial K$ is given by
\begin{align*}
    X_K(x) = h_K(x)x + \nabla h_K(x).
\end{align*}
By \cite[Section 2.5]{Sch14}, the class $\{h_K:\ K\in\mathcal{K}_+^2\}$ coincides with $C_h^2(\mathbb{S}^{n-1})$. Let $\{\kappa_i(x)\}_{i=1}^{n-1}$ denote the principal curvatures of $\partial K$ at $X_K(x)$, which correspond to the eigenvalues of $(D^2 h_K(x))^{-1}$. Then the Gauss curvature $H_{n-1}(\kappa)$ of $\partial K$ satisfies
\begin{align*}
    \frac{1}{H_{n-1}(\kappa)} =\det(D^2 h_K)=\det(\nabla^2h_K+h_K I).
\end{align*}
For details, we refer to \cites{Sch14, KM22, ACGL20}.

The $L_p$-surface-area measure $S_p K$ of $K$ is defined by
\begin{align*}
    dS_p K:= h_K^{1-p} \det(\nabla^2 h_K + h_K I) d\mu,
\end{align*}
with the cone-volume measure $V_K$ of $K$ given as
\begin{align*}
    dV_K:=\frac{1}{n}dS_0K=\frac{1}{n} h_K \det(\nabla^2 h_K + h_K I) d\mu.
\end{align*}
Given a measure $\mathbf{m}$ on $\mathbb{S}^{n-1}$, the $L^2$-distance of $g_1,g_2\in C^2(\mathbb{S}^{n-1})$ with respect to $\mathbf{m}$ is
\begin{align*}
    \delta_2^{\mathbf{m}}(g_1,g_2):=\left(\int_{\mathbb{S}^{n-1}}(g_1-g_2)^2 d\mathbf{m}\right)^{\frac{1}{2}}.
\end{align*}
This extends to two convex bodies $L_1,L_2\in \mathcal{K}$ via $\delta_2^{\mathbf{m}}(L_1,L_2):=\delta_2^{\mathbf{m}}(h_{L_1},h_{L_2})$.

A measure $\mathbf{m}$ on $\mathbb{S}^{n-1}$ is called isotropic if
\begin{align*}
    \int_{\mathbb{S}^{n-1}}\langle x,w\rangle^2 d\mathbf{m}(x)=\frac{\|\mathbf{m}\|}{n}|w|^2,\quad \forall w\in\mathbb{R}^n,
\end{align*}
where the total measure $\|\mathbf{m}\|=\int_{\mathbb{S}^{n-1}} d\mathbf{m}$.
With this, we introduce $S_2$-isotropy as follows.
\begin{defn}
    A convex body $K\in\mathcal{K}$ is called \textit{$S_2$-isotropic} if its $L_2$-surface-area measure $S_2K$ is isotropic, or equivalently, if its LYZ ellipsoid $\Gamma_{-2}K$ is a ball.
\end{defn}
Due to \cite[Lem 1]{LYZ00} (see also \cite[Lem 4.1]{LYZ05}), for any $K\in\mathcal{K}$, there exists $T\in SL(n)$ (unique up to composition with rotations) such that $S_2T(K)$ is isotropic.

$\ $

Let $\text{Sym}(m)$ be the set of real symmetric $m\times m$ matrices. For an $m$-tuple $(A^1,\ldots,A^m)$ with $A^i \in \text{Sym}(m)$, define the mixed discriminant $Q$ by
\begin{align*}
    Q(A^1,\ldots,A^m) := \frac{1}{m!} \sum \delta_{i_1\cdots i_m}^{j_1\cdots j_m} A_{i_1j_1}^1 \cdots A_{i_mj_m}^m,
\end{align*}
where $\delta_{i_1\cdots i_m}^{j_1\cdots j_m} = \det(\delta_{i_s}^{j_t})$ is the generalized Kronecker delta. The partial operator $Q^{ij}$ satisfies
\begin{align*}
    Q^{ij}(A^2,\ldots,A^m) := \frac{1}{m!} \sum \delta_{i i_2\cdots i_m}^{j j_2\cdots j_m} A_{i_2j_2}^2 \cdots A_{i_mj_m}^m,
\end{align*}
which is symmetric multi-linear with decomposition:
\begin{align*}
    Q(A^1,\ldots,A^m) = \sum_{i,j} A_{ij}^1 Q^{ij}(A^2,\ldots,A^m).
\end{align*}
When $A^1 = \cdots = A^m = A \in GL(m)$, we have
\begin{align*}
    Q(A,\ldots,A)=\det (A),\quad Q^{ij}(A):=Q^{ij}(A,\ldots,A)=\frac{1}{m}\det(A) (A^{-1})^{ij}.
\end{align*}

The mixed volume functional $V: [C^2(\mathbb{S}^{n-1})]^n \to \mathbb{R}$ is defined by
\begin{align*}
    V(h_1,\ldots,h_n) := \frac{1}{n} \int_{\mathbb{S}^{n-1}} h_1 Q(D^2 h_2,\ldots,D^2 h_n) d\mu.
\end{align*}
Note that for $h_k\in C^3(\mathbb{S}^{n-1})$ ($k\geq 3$), $Q^{ij}(D^2 h_3,\ldots, D^2 h_{n})$ is divergence-free, i.e. $\sum_j \nabla_j Q^{ij} = 0$. By approximation, $V$ is symmetric in its arguments. For convex bodies $K_1,\ldots,K_n \in \mathcal{K}_+^2$, their mixed volume is given by 
\begin{align*}
    V(K_1,\ldots,K_n) = \frac{1}{n} \int_{\mathbb{S}^{n-1}} h_{K_1} Q(D^2 h_{K_2},\ldots,D^2 h_{K_n}) d\mu.
\end{align*}
In particular, if $K_1=\ldots=K_n=K$, we have
\begin{align*}
    V(K) = \frac{1}{n} \int_{\mathbb{S}^{n-1}} h_K \det(D^2 h_K) d\mu = \int_{\mathbb{S}^{n-1}} dV_K.
\end{align*}
For convex bodies $K,L\in\mathcal{K}$ and an $(n-2)$-tuple $\mathcal{C} = (K_3,\ldots,K_n)$, we will use the following abbreviations:
\begin{align*}
    V(K,L,\mathcal{C}):=V(K,L,K_3,\ldots,K_n),
\end{align*}
and
\begin{align*}
    V(L[m],K[n-m]):=V(\underbrace{L,\ldots,L}_m,\underbrace{K,\ldots,K}_{n-m}),\ m=0,1,\ldots,n.
\end{align*}
If $L=B$, this reduces to the quermassintegral of $K$:
\begin{align*}
    W_m(K):=V(B[m],K[n-m]).
\end{align*}

The Alexandrov-Fenchel inequality \cite[Theorem 7.6.8]{Sch14} (see also \cite[Lemma 8]{An97}, \cite[Theorem 4.1]{GMTZ10}) states that for any $f \in C^2(\mathbb{S}^{n-1})$ and $h, h_3,\ldots,h_n \in C_h^2(\mathbb{S}^{n-1})$,
\begin{align}\label{ineq-AF}
    V(fh,h,\mathcal{C})^2 \geq V(fh,fh,\mathcal{C}) V(h,h,\mathcal{C}),
\end{align}
where $\mathcal{C}= (h_3,\ldots,h_n)$. Equality holds if and only if there exist $v \in \mathbb{R}^n$ and $c \in \mathbb{R}$ such that
\begin{align*}
    f(x) = \frac{\langle x,v\rangle}{h(x)} + c,\quad\forall x\in\mathbb{S}^{n-1}.
\end{align*}

\subsection{Hilbert-Brunn-Minkowski operator}$\ $

Given $K\in\mathcal{K}_{+}^2$, the Hilbert-Brunn-Minkowski operator of $K$, denoted $L_K$, is the second-order linear differential operator on $C^2(\mathbb{S}^{n-1})$ defined by
\begin{align}
    L_K:=\tilde{L}_K-\text{Id},\ \  \tilde{L}_K z:=\frac{Q(D^2(zh_K),D^2 h_K,\ldots,D^2 h_K)}{\det(D^2h_K)},\quad z\in C^2(\mathbb{S}^{n-1}).
\end{align}
This operator was introduced by Kolesnikov and Milman \cite{KM22} in their study of the local $L_p$-Brunn-Minkowski inequality, building on Hilbert’s earlier work with a different normalization (see \cite[Section 52]{BF87}).
Recent work \cite{Mi25} has reinterpreted the operator $\Delta_K=(n-1)L_K$ as the centro-affine Laplacian on $\partial K$. See also \cite{IM24, Mi24}.

We observe that
\begin{align*}
    \tilde{L}_Kz=\sum\limits_{i,j}\frac{Q^{ij}(D^2 h_K)}{\det(D^2 h_K)}D^2_{ij}(zh_K)
    =\frac{1}{n-1}\sum\limits_{i,j}((D^2h_K)^{-1})^{ij} D_{ij}^2(zh_K),
\end{align*}
and then
\begin{align*}
    L_K z
    &=\frac{1}{n-1}\sum\limits_{i,j}((D^2h_K)^{-1})^{ij} D_{ij}^2(zh_K)-z\\
    &=\frac{1}{n-1}\sum\limits_{i,j}((D^2h_K)^{-1})^{ij} (z_{ij}h_K+z_i(h_K)_j+z_j(h_K)_i)\\
    &=\frac{1}{n-1}\sum\limits_{i,j}((D^2h_K)^{-1})^{ij}\frac{(z_i h_K^2)_j}{h_K}.
\end{align*}
It follows from the definition that
\begin{equation} \label{Wk}
\begin{split}
     & \int_{\mathbb{S}^{n-1}} \tilde{L}_K1 dV_K=V(K),  \\
    & \int_{\mathbb{S}^{n-1}} \tilde{L}_K\left(\frac{h_L}{h_K}\right) dV_K=V(K[n-1],L[1]), \\  
    & \int_{\mathbb{S}^{n-1}} \frac{h_L}{h_K}\tilde{L}_K\left(\frac{h_L}{h_K}\right) dV_K=V(K[n-2],L[2]). 
\end{split}
\end{equation}

We now present the properties of the Hilbert-Brunn-Minkowski operator as follows:
\begin{thm}[\cite{KM22}]\label{thm-LK}
    Let $K\in\mathcal{K}_{+}^2$.
    \begin{enumerate}[(1)]
        \item The operator $-L_K:C^2(\mathbb{S}^{n-1})\to C^2(\mathbb{S}^{n-1})$  is symmetric, elliptic and positive semi-definite on $L^2(V_K)$. Specifically, for any $z_1,z_2\in C^2(\mathbb{S}^{n-1})$,
        \begin{align*}\label{self-ad}
            \int_{\mathbb{S}^{n-1}} z_1 (-L_K z_2) dV_K
            =\frac{1}{n-1}\int_{\mathbb{S}^{n-1}} h_K ((D^2 h_K)^{-1})^{ij}(z_1)_i(z_2)_j dV_K
            =\int_{\mathbb{S}^{n-1}} z_2 (-L_K z_1) dV_K.
        \end{align*}
        Hence it admits a unique self-adjoint extension in $L^2(V_K)$ with domain $\text{Dom}(-L_K) = H^2(\mathbb{S}^{n-1})$, which we continue to denote by $-L_K$.

        \item The spectrum $\sigma(-L_K) \subset [0, \infty)$ is discrete and consists of a countable sequence of eigenvalues $\{\lambda_i(-L_K)\}_{i \geq 0}$ with finite multiplicities, arranged in increasing order (each distinct eigenvalue represented once) and tending to $\infty$. Explicitly,
        \begin{enumerate}[(i)]
            \item $\lambda_0(-L_K) = 0$ with multiplicity one, corresponding to the one-dimensional subspace of constant functions, i.e. $E_0 := \text{span}(1)$. 

            \item $\lambda_1(-L_K) = 1$ with multiplicity precisely $n$, corresponding to the $n$-dimensional subspace spanned by renormalized linear functions, i.e. 
            $$E_1^K=\text{span}\left\{\ell_v^K(x)=\frac{\langle x,v\rangle}{h_K(x)}, v\in\mathbb{R}^n\right\}.$$

            \item The second non-zero eigenvalue $\lambda_2(-L_K)$ satisfies
            \begin{align*}
                \lambda_2(-L_K)=\min \sigma\left(\left.-L_K\right|_{(E_0)^{\perp}\cap (E_1^K)^{\perp}}\right)>1.
            \end{align*}
        \end{enumerate}

        \item When $K=B$ is the unit ball, it follows that $-L_B=-\frac{1}{n-1}\Delta_{\mathbb{S}^{n-1}}$, where $\Delta_{\mathbb{S}^{n-1}}$ is the Laplace-Beltrami operator on $\mathbb{S}^{n-1}$. Then
        $$\lambda_{l}(-L_B)=\frac{l(l+n-2)}{n-1}.$$
        In particular, spherical harmonics of degree $2$ are homogeneous quadratic harmonic polynomials.

        \item If $\{K_m\}\subset \mathcal{K}_+^2$ and $K_m\to K$ in $C^2$, then for all $i\geq 0$
        \begin{align*}
            \lim\limits_{m\to\infty}\lambda_i(-L_{K_m})=\lambda_i(-L_K).
        \end{align*}

        \item For any $T\in \text{GL}(n)$, $L_K$ and $L_{T(K)}$ are conjugates via an isometry of Hilbert spaces, and then have the same spectrum
        \begin{align*}
            \sigma(-L_K)=\sigma(-L_{T(K)}).
        \end{align*}
    \end{enumerate}
\end{thm}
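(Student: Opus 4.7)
Parts (1)--(5) naturally split along their logical dependencies, and my plan is to follow the approach of Kolesnikov and Milman \cite{KM22}. For (1), the key tool is the divergence-free identity $\sum_j \nabla_j Q^{ij}(D^2 h_K,\ldots,D^2 h_K) = 0$ noted after the definition of mixed volumes. Integrating $\int z_1 (-L_K z_2)\, dV_K$ by parts twice and using this identity to kill every term in which a derivative lands on the cofactor, I expect the pairing to collapse into the manifestly symmetric Dirichlet form
\begin{equation*}
    \mathcal{E}_K(z_1,z_2) = \frac{1}{n-1}\int_{\mathbb{S}^{n-1}} h_K\, ((D^2 h_K)^{-1})^{ij}(z_1)_i(z_2)_j \, dV_K.
\end{equation*}
Since $(D^2 h_K)^{-1}$ is smooth and positive definite on $\mathbb{S}^{n-1}$ by hypothesis on $K$, this form is elliptic and positive semi-definite, and the Friedrichs extension on the closed manifold yields a unique self-adjoint realization with domain $H^2(\mathbb{S}^{n-1})$.

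For (2), discreteness of the spectrum with $\lambda_k\to\infty$ is a standard consequence of ellipticity on a closed manifold. The first two eigenvalues I would extract by direct substitution: $\tilde{L}_K 1 = 1$ since $Q(D^2 h_K,\ldots,D^2 h_K) = \det(D^2 h_K)$, and $\tilde{L}_K \ell_v^K = 0$ because $\ell_v^K h_K = \langle \cdot, v\rangle$ extends $1$-homogeneously to a linear function on $\mathbb{R}^n$, forcing $D^2(\ell_v^K h_K) \equiv 0$ on $T\mathbb{S}^{n-1}$. The delicate point is the sharp multiplicity $\dim E_1^K = n$, and for this I would invoke the equality case of Alexandrov--Fenchel (\ref{ineq-AF}). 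The mixed-volume identities (\ref{Wk}) with $h = h_K$ and $f = z$, combined with self-adjointness of $\tilde{L}_K$, translate (\ref{ineq-AF}) with $(h_3,\ldots,h_n)=(h_K,\ldots,h_K)$ into the Poincar\'e-type bound $\mathcal{E}_K(z,z) \geq \int z^2\, dV_K$ for $z \perp 1$ in $L^2(V_K)$, which is precisely $\lambda_1(-L_K) \geq 1$; an eigenfunction saturating the bound must produce equality in (\ref{ineq-AF}), and the rigidity clause forces $z h_K = \langle \cdot, v\rangle + c\, h_K$. Orthogonality to constants (using the Minkowski identity $\int x\, dS_K = 0$ to see that $\ell_v^K \perp 1$) kills $c$, so $z \in E_1^K$. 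The strict bound $\lambda_2(-L_K) > 1$ then follows from the variational principle applied on $(E_0 \oplus E_1^K)^\perp$.

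Parts (3)--(5) are of a different character. For (3), $h_B \equiv 1$ gives $\tilde{L}_B = \frac{1}{n-1}\tr D^2 = \frac{1}{n-1}(\Delta_{\mathbb{S}^{n-1}} + (n-1)\,\text{Id})$, so $-L_B = -\frac{1}{n-1}\Delta_{\mathbb{S}^{n-1}}$, and the spherical harmonic spectrum yields $\lambda_\ell(-L_B) = \ell(\ell+n-2)/(n-1)$. Part (4) follows from min-max: under $C^2$-convergence $K_m \to K$, the coefficients of $\mathcal{E}_{K_m}$ and the measures $dV_{K_m}$ converge uniformly with uniform ellipticity, so each variationally characterized eigenvalue converges. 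For (5), the identity $h_{T(K)}(x) = h_K(T^t x)$ on $\mathbb{R}^n$ together with the radial projection $x \mapsto T^t x/|T^t x|$ produces an $L^2(V_{T(K)}) \to L^2(V_K)$ isometry intertwining $L_{T(K)}$ with $L_K$, yielding the claimed spectral equality.

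The main obstacle I anticipate is nailing down the exact multiplicity of $\lambda_1$ in (2): direct substitution only gives $E_1^K \subseteq \ker(-L_K - \text{Id})$, and the reverse inclusion genuinely depends on reducing the pointwise eigenvalue equation to the equality case of (\ref{ineq-AF}) via (\ref{Wk}) and then invoking its rigidity clause. Everything else is linear algebra, standard elliptic spectral theory on a closed manifold, or a change of variables.
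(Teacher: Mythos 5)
This theorem is quoted from Kolesnikov--Milman \cite{KM22}, so the paper supplies no proof of its own; your sketch reproduces the KM22 line of argument faithfully: integration by parts against the divergence-free cofactor tensor for (1), the identification $\tilde L_K 1=1$ and $D^2(\ell_v^K h_K)=D^2\langle\cdot,v\rangle=0$ for the first two eigenspaces, the Alexandrov--Fenchel equality case (equivalently Lemma \ref{key-lem}) for the sharp multiplicity of $\lambda_1$, and the homogeneity of the construction for (3)--(5). The proposal is correct and takes essentially the same route as the cited source.
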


\begin{rem}
    Hilbert initially considered the operator  
    \begin{align*}  
       \mathcal{A}_K f:= h_K \frac{Q(D^2 f,D^2 h_K,\ldots,D^2 h_K)}{\det(D^2h_K)}=h_K\tilde{L}_K\left(\frac{f}{h_K}\right),\quad f\in C^2(\mathbb{S}^{n-1}),  
    \end{align*}  
    associated with the measure  
    \begin{align*}  
      d\mu_K:=\frac{1}{n h_K}\det(D^2 h_K)d\mu=\frac{1}{h_K^2}dV_K.  
    \end{align*}  
    It follows that $\mathcal{A}_K$ shares the same spectrum as $\tilde{L}_K$. Hilbert showed that Minkowski’s second inequality is equivalent to $\lambda_1(-\mathcal{A}_K)\geq 0$, and confirmed that $\lambda_1(-\mathcal{A}_K)= 0$, thereby providing a spectral proof of Minkowski’s second inequality and, consequently, the Brunn-Minkowski inequality; see \cite[Section 52]{BF87}.  
\end{rem}

Given $K\in\mathcal{K}_{+,e}^2$, Kolesnikov and Milman \cite{KM22} introduced the first non-zero even eigenvalue of $-L_K$, corresponding to an even eigenfunction, as  
\begin{align*}  
    \lambda_{1,e}(-L_K) = \min \sigma\left(\left.-L_K\right|_{(E_0)^{\perp}\cap E_{\text{even}}}\right),  
\end{align*}  
which admits the following characterization
\begin{align*}  
    \lambda_{1,e}(-L_K) &= \inf\left\{\frac{\int_{\mathbb{S}^{n-1}} z(-L_K z)dV_K}{\int_{\mathbb{S}^{n-1}} z^2dV_K} : z\in C_e^2(\mathbb{S}^{n-1})\backslash E_0, \int_{\mathbb{S}^{n-1}} z\, dV_K=0 \right\} \\  
    &= \inf\left\{\frac{\int_{\mathbb{S}^{n-1}} z(-L_K z)dV_K}{\int_{\mathbb{S}^{n-1}} z^2dV_K - \frac{\left(\int_{\mathbb{S}^{n-1}} z\, dV_K\right)^2}{V(K)}} : z\in C_e^2(\mathbb{S}^{n-1})\backslash E_0 \right\}. 
\end{align*}  
Since $E_1^K$ associated with $\lambda_1(-L_K)=1$ comprises odd functions, we have $\lambda_{1,e}(-L_K) > 1$. In \cite{KM22}, Kolesnikov and Milman established a significant connection between the even spectral-gap of $-L_K$ beyond $1$ and the local $L_p$-Brunn-Minkowski conjecture:  
\begin{prop}[\cite{KM22}]  
    For $K\in\mathcal{K}_{+,e}^2$ and $p<1$, the local $L_p$-Brunn-Minkowski conjecture for $K$ is equivalent to the following spectral-gap estimate  
    \begin{align}  
        \lambda_{1,e}(-L_K) \geq \frac{n-p}{n-1}.  
    \end{align}  
\end{prop}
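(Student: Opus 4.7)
The plan is to match both sides of the claimed equivalence to the second variation of volume along a local $L_p$-Minkowski deformation of $K$, and then to invoke the Rayleigh-quotient characterization of $\lambda_{1,e}(-L_K)$ recorded just above the statement. First, for a fixed $K\in\mathcal{K}_{+,e}^2$ and an arbitrary even test function $\psi\in C_e^2(\mathbb{S}^{n-1})$, I would parametrize the local $L_p$-deformation by $h_{K_t}^p = h_K^p + t\psi$ (replaced by $\log h_{K_t} = \log h_K + t\psi$ at $p=0$); for $|t|$ small this keeps $K_t\in\mathcal{K}_{+,e}^2$. Setting $u:=h'_{K_t}|_{t=0}/h_K\in C_e^2(\mathbb{S}^{n-1})$, the assignment $\psi\mapsto u$ is a bijection onto $C_e^2(\mathbb{S}^{n-1})$, so it suffices to parametrize the infinitesimal inequality in $u$.

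Next I would compute the first two $t$-derivatives of $V(K_t)$ at $t=0$ in terms of $u$. Starting from $V(K_t)=\tfrac{1}{n}\int h_{K_t}\det D^2 h_{K_t}\,d\mu$ and using the identity $\partial_t\det D^2 h_{K_t}=\det D^2 h_{K_t}\cdot\mathrm{tr}((D^2 h_{K_t})^{-1}D^2 h'_{K_t})$ together with the definition of $\tilde L_{K_t}$, I would obtain $V'(K_t)|_0=n\int u\,dV_K$ (after using $\tilde L_K 1 = 1$ and self-adjointness of $\tilde L_K$ on $L^2(V_K)$ to collapse the two contributions). To find the second derivative I would differentiate $V'(K_t)=n\int (h'_{K_t}/h_{K_t})\,dV_{K_t}$ again, substitute $h''_{K_t}|_0=(1-p)h_K u^2$ and the cone-volume derivative $\partial_t dV_{K_t}=[u_t+(n-1)\tilde L_{K_t}u_t]\,dV_{K_t}$ where $u_t:=h'_{K_t}/h_{K_t}$, and then use $\tilde L_K u=L_K u + u$ to fold everything into $L_K$; the outcome is the clean formula
\[
V''(K_t)|_0 \;=\; n(n-p)\int u^2\,dV_K \;-\; n(n-1)\int u(-L_K u)\,dV_K.
\]

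Having the two variation formulas in hand, I would convert the conjecture into the required spectral inequality. The local $L_p$-Brunn-Minkowski conjecture for $K$ may be written uniformly over $p<1$ as the infinitesimal inequality $V(K)\cdot V''(K_t)|_0\leq \tfrac{n-p}{n}(V'(K_t)|_0)^2$ for every local $L_p$-deformation (equivalent to concavity of $V^{p/n}$ at $t=0$ for $p>0$, concavity of $\log V$ at $p=0$, and convexity of $V^{p/n}$ at $t=0$ for $p<0$; in each regime the sign of $\frac{p}{n}$ produces the same scalar inequality in $V, V', V''$). Plugging in the formulas from the previous step and rearranging yields the equivalent bound
\[
\int u(-L_K u)\,dV_K \;\geq\; \frac{n-p}{n-1}\left(\int u^2\,dV_K \;-\; \frac{\bigl(\int u\,dV_K\bigr)^2}{V(K)}\right)
\]
for every $u\in C_e^2(\mathbb{S}^{n-1})\setminus E_0$. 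By the Rayleigh-quotient characterization of $\lambda_{1,e}(-L_K)$ recalled in the excerpt, this is precisely the spectral-gap estimate $\lambda_{1,e}(-L_K)\geq(n-p)/(n-1)$, and both implications follow.

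The main technical obstacle is the second-variation computation: the clean coefficients $(n-p)$ and $(n-1)$ emerge only after carefully applying the self-adjointness of $\tilde L_K$ on $L^2(V_K)$ and the identity $\tilde L_K 1=1$ to rewrite $\int u\tilde L_K u\,dV_K$ as $\int u^2\,dV_K - \int u(-L_K u)\,dV_K$, while also tracking $\partial_t dV_{K_t}$ without sign errors. A secondary bookkeeping point is to handle uniformly the sign of $p$ in the concavity/convexity convention, including the limiting logarithmic deformation at $p=0$, so that the same algebraic inequality $V\cdot V''\leq\tfrac{n-p}{n}(V')^2$ covers every $p<1$.
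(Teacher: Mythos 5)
Your proposal is correct, and it reproduces the Kolesnikov--Milman derivation from \cite{KM22}: write the local $L_p$-Brunn--Minkowski inequality as the second-order condition $V(K)\,V''(K_t)|_0\leq\tfrac{n-p}{n}\bigl(V'(K_t)|_0\bigr)^2$ along an $L_p$-deformation, compute $V'$ and $V''$ in terms of $u=h'_{K_t}|_0/h_K$ and $L_K$, and identify the resulting inequality with the Rayleigh-quotient characterization of $\lambda_{1,e}(-L_K)$. The paper itself does not prove this proposition but cites \cite{KM22}, so there is no divergence to report.
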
  

Furthermore, Kolesnikov and Milman \cite{KM22} proved that for $K\in\mathcal{K}_{+,e}^2$ and $p_0 = 1 - \frac{c}{n^{\frac{3}{4}}}$,
$$\lambda_{1,e}(-L_K) \geq \frac{n-p_0}{n-1}.$$
Combined with the local-to-global principle derived by Chen-Huang-Li-Liu \cite{CHLZ23}, this implies that the $L_p$-Brunn-Minkowski conjecture holds for $p \in [p_0, 1)$. Subsequent advances on the KLS conjecture by Chen \cite{Ch21} and Klartag-Lehec \cite{KL22} improved this result to $p_0 = 1 - \frac{c}{n\log n}$. 

On the other hand, Milman \cite{Mi24} established the sharp upper-bound estimate for $K\in\mathcal{K}_{+,e}^2$,
$$\lambda_{1,e}(-L_K) \leq \frac{2n}{n-1},$$
with equality if and only if $K$ is an origin-centred ellipsoid.

$\ $

For the second non-zero eigenvalue, $\lambda_2(-L_K)$ can be expressed as:  
\begin{align*}  
    \lambda_2(-L_K) &= \inf\left\{\frac{\int_{\mathbb{S}^{n-1}} z(-L_K z)dV_K}{\int_{\mathbb{S}^{n-1}} z^2dV_K} : z\in C^2(\mathbb{S}^{n-1})\backslash E_0, \int_{\mathbb{S}^{n-1}} z\, dV_K=0,\right.\\
    &\quad\quad\quad\quad\quad\quad\quad\quad \quad\quad\quad\quad  \left.\int_{\mathbb{S}^{n-1}} \ell_v^K z\, dV_K=0, \forall v\in\mathbb{R}^n \right\} \\  
    &= \inf\left\{\frac{\int_{\mathbb{S}^{n-1}} z(-L_K z)dV_K - \int_{\mathbb{S}^{n-1}} (\ell_{v_z}^K)^2 dV_K}{\int_{\mathbb{S}^{n-1}} z^2dV_K - \frac{\left(\int_{\mathbb{S}^{n-1}} z\, dV_K\right)^2}{V(K)} - \int_{\mathbb{S}^{n-1}} (\ell_{v_z}^K)^2 dV_K} : z\in C^2(\mathbb{S}^{n-1})\backslash(E_0 + E_1^K), \right. \\  
    &\quad\quad\quad\quad \left. v_z\in\mathbb{R}^n \ \text{such that} \ \int_{\mathbb{S}^{n-1}} \ell_w^K z\, dV_K = \int_{\mathbb{S}^{n-1}} \ell_w^K \ell_{v_z}^K dV_K, \forall w\in\mathbb{R}^n \right\},  
\end{align*}  
where $v_z$ exists due to the positive definiteness of $\int_{\mathbb{S}^{n-1}} x \otimes x \frac{dV_K(x)}{h_K^2(x)}$. 

By definition, we have
\begin{align*}
    \lambda_{1,e}(-L_K) \geq \lambda_{2}(-L_K)>1.
\end{align*}
It is natural to ask whether the next eigenvalue gap beyond $1$ is uniform for all $K\in\mathcal{K}_+^2$. 

$\ $

We now extend the Hilbert-Brunn-Minkowski operator to multiple convex bodies. Given $ K_1,\ldots,K_{n-2} \in \mathcal{K}_+^2 $, we define the operator $ L_{\mathcal{C}} $ for $\mathcal{C}=(K_1,\ldots,K_{n-2})$ by
\begin{align*}
    L_{\mathcal{C}} := \tilde{L}_{\mathcal{C}} - \text{Id}, \quad \tilde{L}_{\mathcal{C}} z := \frac{Q(D^2(zh_{K_1}),D^2 h_{K_1},\ldots,D^2 h_{K_{n-2}})}{Q(D^2 h_{K_1},D^2 h_{K_1},\ldots,D^2 h_{K_{n-2}})},
\end{align*}
with the associated mixed cone-volume measure
\begin{align*}
    dV_{\mathcal{C}} := \frac{1}{n}h_{K_1}Q(D^2h_{K_1},D^2 h_{K_1},\ldots,D^2 h_{K_{n-2}})d\mu.
\end{align*}
When $ K_1 = \cdots = K_{n-2} = K $, this reduces to the original operator $ L_K $ and measure $ dV_K $.

The operator $ L_{\mathcal{C}} $ shares similar properties with $ L_K $. It is symmetric with respect to $ dV_{\mathcal{C}} $, satisfying
\begin{align*}
    \int_{\mathbb{S}^{n-1}} f L_{\mathcal{C}} g \, dV_{\mathcal{C}}
    = \int_{\mathbb{S}^{n-1}} g L_{\mathcal{C}} f \, dV_{\mathcal{C}}, \quad f,g\in C^2(\mathbb{S}^{n-1}),
\end{align*}
and connects to mixed volumes via
\begin{align*}
    V(f,g,\mathcal{C}) 
    = \int_{\mathbb{S}^{n-1}} \frac{f}{h_{K_1}} \tilde{L}_{\mathcal{C}}\left(\frac{g}{h_{K_1}}\right)dV_{\mathcal{C}}.
\end{align*}
Moreover, $-L_{\mathcal{C}}$ has a discrete spectrum $\{\lambda_i(-L_{\mathcal{C}})\}_{i\geq 0}$ (each distinct eigenvalue listed once) satisfying $0 = \lambda_0 < \lambda_1 = 1 < \lambda_2 < \cdots \to \infty$, where $\lambda_0$ corresponds to the eigenspace $E_0$, $\lambda_1$ corresponds to the eigenspace $E_1^{K_1}$, and $\lambda_2$ admits the variational characterization
\begin{align*}
    \lambda_2(-L_{\mathcal{C}}) = \inf\left\{ \frac{\int_{\mathbb{S}^{n-1}} z(-L_{\mathcal{C}} z)dV_{\mathcal{C}}}{\int_{\mathbb{S}^{n-1}} z^2\, dV_{\mathcal{C}}} : z\in C^2(\mathbb{S}^{n-1})\backslash E_0, \int_{\mathbb{S}^{n-1}} zdV_{\mathcal{C}}=0, \int_{\mathbb{S}^{n-1}} \ell_v^{K_1} z dV_{\mathcal{C}}=0, \forall v \right\}.
\end{align*}

\begin{rem}
This extension relates to work by Shenfeld and van Handel \cite{SvH19}, who studied the operator
$$
\mathcal{A}_{\mathcal{C}} f:= h_{K_1} \frac{Q(D^2 f,D^2 h_{K_1},\ldots,D^2 h_{K_{n-2}})}{Q(D^2 h_{K_1},D^2 h_{K_1},\ldots,D^2 h_{K_{n-2}})}, \quad f\in C^2(\mathbb{S}^{n-1}),
$$
with associated measure $ d\mu_{\mathcal{C}} := \frac{1}{n h_{K_1}}Q(D^2 h_{K_1},D^2 h_{K_1},\ldots,D^2 h_{K_{n-2}})d\mu $. Note that $ \mathcal{A}_{\mathcal{C}} $ and $ \tilde{L}_{\mathcal{C}} $ are related through $ \mathcal{A}_{\mathcal{C}} f = h_{K_1} \tilde{L}_{\mathcal{C}}(f/h_{K_1}) $, and consequently share the same spectrum. The properties and applications of $ \mathcal{A}_{\mathcal{C}} $ have been further developed in \cites{SvH19, vH23}.
\end{rem}

\subsection{Local Brunn-Minkowski inequality}$\ $

The local Brunn-Minkowski inequality is an infinitesimal form of the classical Brunn-Minkowski inequality. Its spectral interpretation, which originates from Hilbert's work, has been further studied in \cite{KM22,Mi25,IM23}.

\begin{lem}[\cites{An97, ACGL20}]\label{key-lem}
    Let $f \in C^2(\mathbb{S}^{n-1})$ satisfy $\int_{\mathbb{S}^{n-1}} f dV_K = 0$. Then
    \begin{align}\label{loc-BM}
        \int_{\mathbb{S}^{n-1}} f^2 dV_K \leq \frac{1}{n-1}\int_{\mathbb{S}^{n-1}} h_K ((D^2 h_K)^{-1})^{ij} f_i f_j dV_K,
    \end{align}
    with equality if and only if for some $v \in \mathbb{R}^{n}$,
    \begin{align*}
        f(x)=\frac{\langle x,v\rangle}{h_K(x)},\quad x\in\mathbb{S}^{n-1}.
    \end{align*}
\end{lem}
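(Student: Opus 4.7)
The plan is to deduce (\ref{loc-BM}) directly from the Alexandrov-Fenchel inequality (\ref{ineq-AF}). I would apply AF with the support function $h = h_K$ and the constant tuple $\mathcal{C} = (h_K,\dots,h_K)$, which gives
\begin{align*}
    V(fh_K, h_K, K,\dots, K)^2 \geq V(fh_K, fh_K, K,\dots, K)\, V(K).
\end{align*}
The hypothesis $\int_{\mathbb{S}^{n-1}} f\, dV_K = 0$ is precisely the statement that the left-hand side vanishes, since the integral representation of mixed volume yields $V(fh_K, h_K, K,\dots, K) = \frac{1}{n}\int fh_K \det(D^2 h_K)\, d\mu = \int f\, dV_K = 0$. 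Hence AF forces $V(fh_K, fh_K, K,\dots, K) \leq 0$.

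Next I would translate the remaining mixed volume into an expression involving the Hilbert-Brunn-Minkowski operator. Using the third identity in (\ref{Wk}) with the auxiliary support function $fh_K$ (so that $h_L/h_K = f$), together with $\tilde{L}_K = L_K + \mathrm{Id}$ and the symmetric integration-by-parts identity in Theorem \ref{thm-LK}(1), I would obtain
\begin{align*}
    V(fh_K, fh_K, K,\dots, K) = \int_{\mathbb{S}^{n-1}} f\tilde{L}_K f\, dV_K = \int_{\mathbb{S}^{n-1}} f^2\, dV_K - \frac{1}{n-1}\int_{\mathbb{S}^{n-1}} h_K ((D^2 h_K)^{-1})^{ij} f_i f_j\, dV_K.
\end{align*}
Combined with the previous bound $V(fh_K, fh_K, K,\dots, K) \leq 0$, this immediately yields (\ref{loc-BM}).

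For the equality case, the characterization in (\ref{ineq-AF}) forces $f(x) = \langle x, v\rangle/h_K(x) + c$ for some $v\in\mathbb{R}^n$ and $c\in\mathbb{R}$. To pin down $c = 0$, I would invoke the vanishing centroid of the surface area measure, $\int_{\mathbb{S}^{n-1}} x\, dS_K = 0$, which gives $\int \langle x, v\rangle/h_K\, dV_K = \frac{1}{n}\int \langle x, v\rangle\, dS_K = 0$; combined with the hypothesis $\int f\, dV_K = 0$, this produces $cV(K) = 0$, hence $c = 0$, so $f = \langle x, v\rangle/h_K$.

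There is no deep conceptual obstacle here: once the mixed-volume identities are aligned with $\tilde{L}_K = L_K + \mathrm{Id}$ and the integration-by-parts formula, the argument is mechanical. The only subtlety is that AF is being applied with first argument $fh_K$, which need not be a support function, but this is precisely the generality in which (\ref{ineq-AF}) is formulated (with arbitrary $f\in C^2(\mathbb{S}^{n-1})$), so no approximation step is needed.
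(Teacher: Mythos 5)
Your proof is correct, and it follows the route the paper itself highlights just after this lemma, where it is explicitly remarked that the local Brunn--Minkowski inequality is the special case $V(fh,h,\ldots,h)^2 \geq V(fh,fh,h,\ldots,h)V(h)$ of Alexandrov--Fenchel, with Lemma~\ref{key-lem-AF} then generalizing to mixed tuples. The translation $V(fh_K,fh_K,K,\ldots,K)=\int f\tilde L_K f\,dV_K=\int f^2\,dV_K-\frac{1}{n-1}\int h_K((D^2h_K)^{-1})^{ij}f_if_j\,dV_K$ via (\ref{Wk}) and Theorem~\ref{thm-LK}(1), the vanishing of the middle mixed volume under $\int f\,dV_K=0$, and the elimination of the constant $c$ in the AF equality case via $\int\langle x,v\rangle\,dS_K=0$ are all handled correctly.
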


Let $\{E_l\}_{l=1}^{n}$ be an orthonormal basis of $\mathbb{R}^n$. Taking the test functions
\begin{align*}
    f_l(x)=\langle X_K(x),E_l\rangle-\frac{\int_{\mathbb{S}^{n-1}} \langle X_K(x),E_l\rangle dV_K}{V(K)},\quad x\in\mathbb{S}^{n-1},\ l=1,\ldots,n,
\end{align*}
we deduce from Lemma \ref{key-lem} that 
\begin{lem}[\cite{IM23}]
    Let $K \in \mathcal{K}_{+}^2$ with inverse Gauss map $X_K: \mathbb{S}^{n-1} \to \partial K$. Then
    \begin{align}\label{ineq-XK}
         \int_{\mathbb{S}^{n-1}} |X_K|^2 dV_K -\frac{\left| \int_{\mathbb{S}^{n-1}} X_K dV_K \right|^2}{V(K)}
         \leq \int_{\mathbb{S}^{n-1}} h_K \left(\frac{1}{n-1}\Delta h_K + h_K\right) dV_K.   
    \end{align}
    Equality holds if and only if $K$ is an origin-centred ellipsoid.
\end{lem}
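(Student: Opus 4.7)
The plan is to carry out the strategy indicated in the statement: apply Lemma \ref{key-lem} to each test function
\[
f_l(x) := \langle X_K(x), E_l\rangle - c_l, \qquad c_l := \tfrac{1}{V(K)}\int_{\mathbb{S}^{n-1}} \langle X_K, E_l\rangle\, dV_K,
\]
for $l = 1,\ldots,n$, so that $\int f_l\, dV_K = 0$ by construction. Summing the resulting $n$ inequalities and invoking $\sum_l (X_K)_l^2 = |X_K|^2$ together with $\sum_l c_l^2\, V(K) = |\int X_K\, dV_K|^2/V(K)$ recovers the left-hand side of \eqref{ineq-XK} exactly.

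For the right-hand side, the key computation is the covariant derivative of $X_K$. Writing $X_K = h_K x + \nabla h_K$ and working in normal coordinates at a base point, the $\mathbb{R}^n$-valued Euclidean derivative reduces to
\[
\bar{\nabla}_{e_i} X_K = \sum_j (D^2 h_K)_{ij}\, e_j,
\]
since the two contributions involving $h_i x$ cancel against $\bar{\nabla}_{e_i} e_j = \nabla_{e_i} e_j - \delta_{ij} x$. Consequently $(f_l)_i = \sum_j (D^2 h_K)_{ij}\langle E_l, e_j\rangle$, and summing over $l$ via the completeness relation $\sum_l \langle E_l, e_a\rangle\langle E_l, e_b\rangle = \delta_{ab}$ together with the contraction $((D^2 h_K)^{-1})^{ij}(D^2 h_K)_{ia}(D^2 h_K)_{ja} = (D^2 h_K)_{jj}$ collapses the quadratic form to $\tr(D^2 h_K) = \Delta h_K + (n-1) h_K$. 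Multiplying by $\tfrac{h_K}{n-1}$ and integrating against $dV_K$ delivers the right-hand side of \eqref{ineq-XK}.

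For the equality characterization, equality in Lemma \ref{key-lem} applied to each $f_l$ forces $f_l(x) = \langle x, v_l\rangle / h_K(x)$ for some $v_l \in \mathbb{R}^n$. Multiplying by $h_K$ and using 1-homogeneity to extend to $\mathbb{R}^n$ yields the vector identity
\[
\bar{\nabla}(h_K^2/2) = V x + h_K c \qquad \text{on } \mathbb{R}^n,
\]
where $V$ has columns $v_l$ and $c = (c_1,\ldots,c_n)$. Differentiating once more and using the symmetry of mixed partial derivatives of $h_K^2$ forces $V$ to be symmetric and $c_k\,\partial_l h_K = c_l\,\partial_k h_K$. If $c \neq 0$, the latter would make $\bar{\nabla} h_K$ parallel to $c$ everywhere, forcing $h_K$ to be a scalar multiple of $\langle x, c\rangle$ -- incompatible with $D^2 h_K > 0$. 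Hence $c = 0$ and $h_K^2(x) = \langle x, Vx\rangle$, identifying $K$ as an origin-centred ellipsoid. The converse is immediate from the $\mathrm{GL}(n)$-conjugacy in Theorem \ref{thm-LK}(5), reducing to $K = B$, where each $f_l(x) = x_l$ lies in the first eigenspace $E_1^B$ and saturates \eqref{loc-BM}.

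The main technical subtlety is the equality analysis rather than the inequality itself: the scalar Euler consequence $h_K^2 = \langle Vx, x\rangle + h_K\langle c, x\rangle$ of the gradient identity is satisfied by \emph{every} translated ellipsoid, so ruling out translations requires exploiting the additional symmetry constraints coming from second derivatives of $h_K^2$, not merely the scalar relation.
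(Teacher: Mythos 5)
Your derivation of the inequality follows the paper exactly: same test functions $f_l(x)=\langle X_K(x),E_l\rangle - c_l$, same appeal to Lemma \ref{key-lem}, and the same key identity $\sum_{i,j,l}((D^2 h_K)^{-1})^{ij}(f_l)_i (f_l)_j = \tr(D^2 h_K) = \Delta h_K + (n-1)h_K$. The paper's computation (in the proof of Lemma \ref{lem-key1}) diagonalizes $D^2 h_K$ at a point; you instead use the completeness relation $\sum_l E_l\otimes E_l=I$ and cancel $D^2 h_K$ against its inverse, which is the same content in a basis-free dress. Both derivations of $\bar\nabla_{e_i}X_K=\sum_j(D^2 h_K)_{ij}e_j$ are correct.

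Where you go beyond the paper is the equality case, which the paper simply defers to Remark 3.4 and Theorem 1.2 of \cite{IM23}. Your sketch is on the right track, but two steps are glossed over. First, from $\partial_k\partial_l(h_K^2/2) = (v_l)_k + c_l\,\partial_k h_K$, symmetry of mixed partials gives only the single relation $(v_l)_k - (v_k)_l = c_k\,\partial_l h_K - c_l\,\partial_k h_K$; it does \emph{not} automatically split into ``$V$ symmetric'' plus ``$c_k\partial_l h_K = c_l\partial_k h_K$.'' To separate them you need the observation that the left side is constant while the right side equals $\langle X_K(x), c_kE_l - c_lE_k\rangle$, which varies over $\mathbb{S}^{n-1}$ unless $c_kE_l-c_lE_k=0$ (since $\partial K$ is not contained in a hyperplane); for $k\neq l$ and $\{E_l\}$ orthonormal this directly forces $c_k=c_l=0$, giving $c=0$ outright without the detour through ``$\bar\nabla h_K$ parallel to $c$.'' Second, the converse direction is not ``immediate from $\mathrm{GL}(n)$-conjugacy'': inequality \eqref{ineq-XK} involves $|X_K|^2$ and $\Delta h_K$, which are Euclidean and not preserved under general $T\in\mathrm{GL}(n)$, so one cannot simply reduce to $K=B$. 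The clean argument is direct: if $h_K^2=\langle x,Vx\rangle$ with $V$ positive definite and the centroid at the origin, then $c_l=0$ and $\partial_l h_K = \langle x, VE_l\rangle/h_K$, so each $f_l=\ell_{VE_l}^K\in E_1^K$ saturates Lemma \ref{key-lem}. With these two repairs your equality analysis is a legitimate self-contained substitute for the citation to \cite{IM23}.
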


\begin{rem}  
    The equality condition for the above inequality can be found in Remark 3.4 and the proof of Theorem 1.2 in \cite{IM23}.  
\end{rem}  

\begin{rem}
    By constructing new test functions, one may employ Lemma \ref{key-lem} to derive more general forms of inequalities. This approach proves particularly useful in establishing uniqueness results for prescribed measure problems. For further developments, see, e.g., \cite{CH25,IM24,LW24,HI2401}.
\end{rem}

Notice that the local Brunn-Minkowski inequality can be viewed as a special case of the Alexandrov-Fenchel inequality:
\begin{align*}
    V(fh,h,h,\ldots,h)^2\geq V(fh,fh,h,\ldots,h) V(h).
\end{align*}
Moreover, the Alexandrov-Fenchel inequality (\ref{ineq-AF}) implies the following lemma.
\begin{lem}\label{key-lem-AF}
    Let $\mathcal{C}=(K,K_2\ldots,K_{n-2})$ be an $(n-2)$-tuple of convex bodies in $\mathcal{K}_+^2$. Let $f \in C^2(\mathbb{S}^{n-1})$ satisfy $\int_{\mathbb{S}^{n-1}} f dV_{\mathcal{C}} = 0$. Then
    \begin{align}\label{loc-AF}
        \int_{\mathbb{S}^{n-1}} f^2 dV_{\mathcal{C}} 
        \leq \int_{\mathbb{S}^{n-1}} f(-L_{\mathcal{C}}f) dV_{\mathcal{C}},
    \end{align}
    with equality if and only if for some $v \in \mathbb{R}^{n}$,
    \begin{align*}
        f(x)=\frac{\langle x,v\rangle}{h_K(x)},\quad x\in\mathbb{S}^{n-1}.
    \end{align*}
\end{lem}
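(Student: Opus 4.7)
The plan is to derive (\ref{loc-AF}) as a direct consequence of the Alexandrov-Fenchel inequality (\ref{ineq-AF}), using the bridge between mixed volumes and the operator $\tilde{L}_{\mathcal{C}}$ recorded in the preliminaries, namely
\[
V(u,w,\mathcal{C}) \;=\; \int_{\mathbb{S}^{n-1}} \frac{u}{h_K}\, \tilde{L}_{\mathcal{C}}\!\left(\frac{w}{h_K}\right) dV_{\mathcal{C}}.
\]
The key bookkeeping observation is that the internal $(n-2)$-tuple appearing inside $\tilde L_{\mathcal{C}}$ and $dV_{\mathcal{C}}$ coincides exactly with the tuple we will feed into AF.

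First I would note the triviality $\tilde{L}_{\mathcal{C}}(1)\equiv 1$ directly from its defining quotient. Taking $u = f h_K$ and $w = h_K$ in the identity above therefore yields $V(f h_K, h_K, \mathcal{C}) = \int_{\mathbb{S}^{n-1}} f\, dV_{\mathcal{C}} = 0$ by the zero-mean hypothesis. Next I would apply (\ref{ineq-AF}) with $h = h_K$ and with AF's internal $(n-2)$-tuple chosen to be $(h_K, h_{K_2},\ldots,h_{K_{n-2}})$. Since the left-hand side of AF vanishes and $V(h_K,h_K,\mathcal{C})>0$, the inequality collapses to $V(f h_K,f h_K,\mathcal{C}) \leq 0$. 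Setting $u = w = f h_K$ in the identity above then reads $\int_{\mathbb{S}^{n-1}} f\, \tilde{L}_{\mathcal{C}} f\, dV_{\mathcal{C}} \leq 0$, and rewriting via $-L_{\mathcal{C}} = \mathrm{Id} - \tilde{L}_{\mathcal{C}}$ produces (\ref{loc-AF}).

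For the equality case, the sharp part of (\ref{ineq-AF}) forces $f(x) = \langle x,v\rangle/h_K(x) + c$ for some $v \in \mathbb{R}^n$ and $c \in \mathbb{R}$. To eliminate $c$, I would appeal once more to the zero-mean hypothesis: translation invariance of mixed volumes gives $\int_{\mathbb{S}^{n-1}} \langle x,v\rangle\, Q(D^2 h_K, D^2 h_K, D^2 h_{K_2},\ldots,D^2 h_{K_{n-2}})\, d\mu = 0$, so $\int f\, dV_{\mathcal{C}} = 0$ reduces to $c\, V(K[3],K_2,\ldots,K_{n-2}) = 0$, forcing $c=0$.

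The main obstacle here is really just the bookkeeping: one must verify that the internal $(n-2)$-tuple inside AF matches the one encoded in $\tilde L_{\mathcal{C}}$ and $dV_{\mathcal{C}}$, so that both sides of (\ref{ineq-AF}) translate cleanly into operator-theoretic expressions via the dictionary. Conceptually, (\ref{loc-AF}) is nothing more than the spectral rephrasing of (\ref{ineq-AF}) in the case where one of the $n-2$ fixed bodies coincides with the distinguished body $K = K_1$ attached to $L_{\mathcal{C}}$; all analytic content is carried by AF itself, and setting $K_2 = \cdots = K_{n-2} = K$ recovers Lemma \ref{key-lem}.
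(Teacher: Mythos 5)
Your proposal is correct and matches the intended argument: the paper offers only the one-line remark that the Alexandrov--Fenchel inequality (\ref{ineq-AF}) implies Lemma~\ref{key-lem-AF}, and your proof is precisely the natural filling-in of that claim, using the dictionary $V(u,w,\mathcal{C})=\int \frac{u}{h_K}\tilde L_{\mathcal C}(\frac{w}{h_K})\,dV_{\mathcal C}$, the vanishing of the middle term $V(fh_K,h_K,\mathcal C)=\int f\,dV_{\mathcal C}=0$, positivity of $V(h_K,h_K,\mathcal C)$, and the AF equality condition combined with $\int \ell_v^K\,dV_{\mathcal C}=0$ to force $c=0$. The only tiny gap is that you verify the ``only if'' direction of the equality statement but not the trivial converse (that $f=\langle x,v\rangle/h_K$ does give equality, via $D^2(fh_K)=0$ and hence $\tilde L_{\mathcal C}f=0$), which is worth a sentence but is immediate.
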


\section{Geometric inequalities arising from the spectrum of $-L_K$}\label{sec:3-new}

\subsection{Spectral interpretations of geometric inequalities}$\ $

Let $\langle\cdot,\cdot\rangle_{L^2(V_K)}$ be the inner product defined by
\begin{align*}
    \langle f,g\rangle_{L^2(V_K)}:=\int_{\mathbb{S}^{n-1}} fg\, dV_K,\quad f,g\in C^2(\mathbb{S}^{n-1}).
\end{align*}

The local Brunn-Minkowski inequality (\ref{loc-BM}) admits a spectral interpretation via the Hilbert-Brunn-Minkowski operator: if $ \langle f,1\rangle_{L^2(V_K)}=0$, then
\begin{align}
    \langle f,(-L_K-1)f\rangle_{L^2(V_K)}=\int_{\mathbb{S}^{n-1}} f(-L_K f-f)dV_K\geq 0,
\end{align}
with equality if and only if $f\in E_1^K$.

For the unit ball, since $L_B=\frac{1}{n-1}\Delta_{\mathbb{S}^{n-1}}$, (\ref{loc-BM}) reduces to the sharp Poincaré inequality on $\mathbb{S}^{n-1}$. Furthermore, by analyzing the eigenvalues of $\Delta_{\mathbb{S}^{n-1}}$ (below abbreviated as $\Delta$), we know that if $\langle f,1\rangle_{L^2(\mu)}=0$ and $\langle f(x),\langle x,v\rangle\rangle_{L^2(\mu)}=0$ for all $v\in\mathbb{R}^n$, then
\begin{align}
    \left\langle f,(-\Delta-2n)f\right\rangle_{L^2(\mu)}=\int_{\mathbb{S}^{n-1}} \left(f(-\Delta f)-2n f^2\right)d\mu\geq 0.
\end{align}
Additionally, if $\langle f,1\rangle_{L^2(\mu)}=0$, we derive
\begin{equation}
\begin{split}
    &\left\langle f,(-\Delta-2n)\left(-\Delta-(n-1)\right)f\right\rangle_{L^2(\mu)}
    =\left\langle (-\Delta-2n)f,\left(-\Delta-(n-1)\right)f\right\rangle_{L^2(\mu)}\\
    =&\int_{\mathbb{S}^{n-1}} \left( (\Delta f+(n-1)f)^2-(n+1) f(-\Delta f-(n-1)f)\right)d\mu\geq 0.
\end{split}
\end{equation}
For further discussion and applications, see \cite{Kw21}.

These observations lead to the following geometric inequalities arising from the spectrum of the Hilbert-Brunn-Minkowski operator:
\begin{lem}\label{lem-spec}
    Let $K\in\mathcal{K}_{+}^2$. Denote $\lambda_2=\lambda_2(-L_K)$.
    \begin{enumerate}[(1)]
        \item If $f\in C^2(\mathbb{S}^{n-1})$ satisfies $\langle f,1\rangle_{L^2(V_K)}=0$ and $\langle f,\ell_v^K\rangle_{L^2(V_K)}=0$ for any $v\in\mathbb{R}^n$, then
        \begin{align}\label{ineq-sta}
             \left\langle f,(-L_K-\lambda_2)f\right\rangle_{L^2(V_K)}
             =\int_{\mathbb{S}^{n-1}} \left(f(-L_K f)-\lambda_2 f^2\right)dV_K\geq 0.
        \end{align}

        \item If $f\in C^2(\mathbb{S}^{n-1})$ satisfies $\langle f,1\rangle_{L^2(V_K)}=0$, then
        \begin{equation}\label{ineq-rev}
        \begin{split}
           &\left\langle f,(-L_K-\lambda_2)\left(-L_K-1\right)f\right\rangle_{L^2(V_K)}
            =\left\langle (-L_K-\lambda_2)f,\left(-L_K-1\right)f\right\rangle_{L^2(V_K)}\\
            =&\int_{\mathbb{S}^{n-1}} \left( (L_K f+f)^2-(\lambda_2-1) f(-L_K f-f)\right)dV_K\geq 0.
        \end{split}
        \end{equation}
    \end{enumerate}
\end{lem}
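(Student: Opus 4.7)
The plan is to derive both inequalities as direct consequences of the spectral decomposition of $-L_K$ provided by Theorem \ref{thm-LK}, together with the variational characterization of $\lambda_2(-L_K)$ recorded in the preliminaries. Since $-L_K$ is self-adjoint and positive semi-definite on $L^2(V_K)$ with a discrete spectrum $\{\lambda_i\}_{i \geq 0}$ and mutually orthogonal eigenspaces $E_0, E_1^K, E_2, E_3, \ldots$, every $f \in C^2(\mathbb{S}^{n-1}) \subset H^2(\mathbb{S}^{n-1}) = \mathrm{Dom}(-L_K)$ admits a unique $L^2(V_K)$-orthogonal eigencomponent decomposition on which the functional calculus of $-L_K$ can be applied term by term.

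For part (1), the hypotheses $\langle f, 1 \rangle_{L^2(V_K)} = 0$ and $\langle f, \ell_v^K \rangle_{L^2(V_K)} = 0$ for every $v \in \mathbb{R}^n$ place $f$ in $(E_0)^{\perp} \cap (E_1^K)^{\perp}$. The variational formula for $\lambda_2(-L_K)$ recalled just before the statement then gives
$$\int_{\mathbb{S}^{n-1}} f(-L_K f)\, dV_K \geq \lambda_2 \int_{\mathbb{S}^{n-1}} f^2\, dV_K,$$
and rearranging this is exactly (\ref{ineq-sta}).

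For part (2), I would decompose $f = f_1 + f_\perp$, where $f_1$ is the $L^2(V_K)$-projection of $f$ onto $E_1^K$ and $f_\perp \in (E_0)^{\perp} \cap (E_1^K)^{\perp}$; the assumption $\langle f,1\rangle_{L^2(V_K)} = 0$ kills the constant component. Self-adjointness of $-L_K$ allows me to rewrite
$$\bigl\langle f, (-L_K - \lambda_2)(-L_K - 1) f \bigr\rangle_{L^2(V_K)} = \bigl\langle (-L_K - \lambda_2) f,\, (-L_K - 1) f \bigr\rangle_{L^2(V_K)}.$$
The operator $-L_K - 1$ annihilates $E_1^K$, so only the $f_\perp$ piece contributes; on $(E_0)^{\perp} \cap (E_1^K)^{\perp}$ the two commuting factors $-L_K - \lambda_2$ and $-L_K - 1$ both have non-negative spectrum, so their product is a non-negative self-adjoint operator there, giving non-negativity of the inner product. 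Finally, expanding the symmetric bilinear form pointwise,
$$(-L_K f - \lambda_2 f)(-L_K f - f) = (L_K f + f)^2 + (\lambda_2 - 1)(L_K f + f) f,$$
and noting that $(\lambda_2-1)(L_K f + f) f = -(\lambda_2-1) f(-L_K f - f)$, reproduces the integrand appearing in (\ref{ineq-rev}).

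The argument is essentially a direct application of the functional calculus of the self-adjoint operator $-L_K$ supplied by Theorem \ref{thm-LK}; I do not anticipate any analytic obstacle beyond the short algebraic verification that the pointwise expansion of the product in part (2) matches the integrand as written.
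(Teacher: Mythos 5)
Your proof is correct and matches the paper's intent: the paper states Lemma \ref{lem-spec} without an explicit proof, treating it as an immediate consequence of the spectral decomposition of $-L_K$ supplied by Theorem \ref{thm-LK} and the variational characterization of $\lambda_2(-L_K)$, which is exactly the route you take. In particular, your decomposition $f = f_1 + f_\perp$ in part (2), the observation that $(-L_K - 1)$ annihilates $E_1^K$ so that the inner product reduces to the $(E_0)^\perp \cap (E_1^K)^\perp$ component where both commuting factors $-L_K - \lambda_2$ and $-L_K - 1$ have non-negative spectrum, and the pointwise algebraic identity producing the stated integrand are precisely the intended reasoning.
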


\begin{rem}
    For the origin-symmetric case, if $K\in\mathcal{K}_{+,e}^2$ and $f\in C_{e}^2(\mathbb{S}^{n-1})$ satisfy $\langle f,1\rangle_{L^2(V_K)}=0$, the inequality (\ref{ineq-sta}) holds with $\lambda_2$ replaced by $\lambda_{1,e}$.
\end{rem}

\subsection{Stability of the local Brunn-Minkowski inequality}$\ $

To explore further applications, we reformulate these inequalities as follows. For any $f \in C^2(\mathbb{S}^{n-1})$, decompose it as 
\begin{align}
    f = c_f + \ell_{v_f}^K + \tilde{f}
\end{align}
where $c_f\in\mathbb{R}$, $v_f\in\mathbb{R}^n$, and $\tilde{f}\in C^2(\mathbb{S}^{n-1})$ satisfy $\langle \tilde{f}, 1 \rangle_{L^2(V_K)} = 0$ and $\langle \tilde{f}, \ell_w^K \rangle_{L^2(V_K)} = 0$ for all $w\in\mathbb{R}^n$. 
The first condition is equivalent to
\begin{align}\label{cf}
    c_f = \frac{\int_{\mathbb{S}^{n-1}} f \, dV_K(x)}{V(K)}.
\end{align}
while the second condition is equivalent to 
\begin{align}\label{vf}
    \int_{\mathbb{S}^{n-1}} \frac{\langle x, v_f \rangle}{h_K^2(x)} x \, dV_K(x) 
    = \int_{\mathbb{S}^{n-1}} \frac{f(x)}{h_K(x)} x \, dV_K(x),
\end{align}
where $v_f$ exists uniquely because $\int_{\mathbb{S}^{n-1}} x \otimes x \, \frac{dV_K(x)}{h_K^2(x)}$ is positive definite.

Applying Lemma \ref{lem-spec}(1) to $\tilde{f}$, we obtain the following stability version of the local Brunn-Minkowski inequality.
\begin{lem} \label{lem-sta}
    Let $K\in\mathcal{K}_{+}^2$. For any $f\in C^2(\mathbb{S}^{n-1})$, we have
    \begin{equation}
    \begin{split}
        &\int_{\mathbb{S}^{n-1}} f(-L_K f)\, dV_K - \int_{\mathbb{S}^{n-1}} f^2\, dV_K + \frac{(\int_{\mathbb{S}^{n-1}} f dV_K)^2}{V(K)} \\
        &\geq (\lambda_2(-L_K)-1)\left(\int_{\mathbb{S}^{n-1}} f^2 \, dV_K - \frac{(\int_{\mathbb{S}^{n-1}} f dV_K)^2}{V(K)} - \int_{\mathbb{S}^{n-1}} \left(\ell_{v_f}^K\right)^2 dV_K\right) \\
        &= (\lambda_2(-L_K)-1)\left(\delta_2^{V_K}(f, c_f + \ell_{v_f}^K)\right)^2,
    \end{split}
    \end{equation}
    where $c_f$ and $v_f$ are given by (\ref{cf}) and (\ref{vf}), respectively.
\end{lem}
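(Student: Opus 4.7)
My plan is to apply Lemma \ref{lem-spec}(1) directly to the orthogonal remainder $\tilde{f}$ in the decomposition $f = c_f + \ell_{v_f}^K + \tilde{f}$ introduced just before the statement. By construction $\tilde{f}$ satisfies $\int_{\mathbb{S}^{n-1}} \tilde{f}\, dV_K = 0$ and $\int_{\mathbb{S}^{n-1}} \tilde{f}\,\ell_w^K\, dV_K = 0$ for every $w \in \mathbb{R}^n$, so it meets exactly the hypotheses of Lemma \ref{lem-spec}(1). That lemma gives
\begin{align*}
    \int_{\mathbb{S}^{n-1}} \tilde{f}(-L_K \tilde{f})\, dV_K - \int_{\mathbb{S}^{n-1}} \tilde{f}^2\, dV_K \geq (\lambda_2(-L_K)-1) \int_{\mathbb{S}^{n-1}} \tilde{f}^2\, dV_K,
\end{align*}
and the task reduces to rewriting both sides of the claimed inequality purely in terms of $\tilde{f}$.

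To do so I would use linearity of $-L_K$ together with $-L_K(c_f)=0$ (constants lie in $E_0$) and $-L_K(\ell_{v_f}^K) = \ell_{v_f}^K$ (since $\ell_{v_f}^K \in E_1^K$). Combined with the self-adjointness of $-L_K$ on $L^2(V_K)$ recorded in Theorem \ref{thm-LK}(1), this produces the auxiliary identity
\begin{align*}
    \int_{\mathbb{S}^{n-1}} \ell_{v_f}^K(-L_K \tilde{f})\, dV_K = \int_{\mathbb{S}^{n-1}} \tilde{f}(-L_K \ell_{v_f}^K)\, dV_K = \int_{\mathbb{S}^{n-1}} \tilde{f}\,\ell_{v_f}^K\, dV_K = 0.
\end{align*}
Using this together with the orthogonality of $\tilde{f}$ to $E_0$ and $E_1^K$, every cross term in the pointwise expansion of $f\cdot(-L_K f)$, $f^2$, and $f$ integrates to zero, yielding the Pythagorean-type identities
\begin{align*}
    \int_{\mathbb{S}^{n-1}} f(-L_K f)\, dV_K &= \int_{\mathbb{S}^{n-1}} (\ell_{v_f}^K)^2\, dV_K + \int_{\mathbb{S}^{n-1}} \tilde{f}(-L_K \tilde{f})\, dV_K,\\
    \int_{\mathbb{S}^{n-1}} f^2\, dV_K &= c_f^2\, V(K) + \int_{\mathbb{S}^{n-1}} (\ell_{v_f}^K)^2\, dV_K + \int_{\mathbb{S}^{n-1}} \tilde{f}^2\, dV_K,\\
    \int_{\mathbb{S}^{n-1}} f\, dV_K &= c_f\, V(K).
\end{align*}

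Substituting these three identities into both sides of the asserted inequality, the contributions $c_f^2 V(K)$ and $\int (\ell_{v_f}^K)^2\, dV_K$ cancel, and what remains is precisely the $\tilde{f}$-inequality obtained in the first step. The final equality in the lemma, which rewrites the right-hand side as $(\lambda_2(-L_K)-1)\bigl(\delta_2^{V_K}(f, c_f + \ell_{v_f}^K)\bigr)^2$, is immediate from the second identity above and the definition $\delta_2^{V_K}(f, c_f + \ell_{v_f}^K)^2 = \int \tilde{f}^2\, dV_K$. I do not foresee a serious obstacle: the lemma is engineered to reduce mechanically to Lemma \ref{lem-spec}(1), and the only point demanding care is bookkeeping the cross terms in the bilinear expansions, all of which vanish thanks to the defining orthogonality properties of $\tilde{f}$. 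As an alternative, one can bypass the decomposition entirely by applying the variational characterization of $\lambda_2(-L_K)$ recalled in Section \ref{sec:2} to the test function $f$ with the vector $v_f$ from (\ref{vf}), after which the lemma follows by a one-line rearrangement.
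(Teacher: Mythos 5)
Your proposal is correct and follows precisely the route the paper takes: the paper simply says "Applying Lemma \ref{lem-spec}(1) to $\tilde{f}$, we obtain..." and you have carried out the same application, filling in the Pythagorean bookkeeping (which uses self-adjointness of $-L_K$, $-L_K 1 = 0$, and $-L_K \ell_{v_f}^K = \ell_{v_f}^K$) that the paper leaves implicit. The alternative you mention at the end, arguing directly from the variational characterization of $\lambda_2(-L_K)$ stated in Section 2, is also valid and essentially equivalent.
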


Similarly, applying Lemma \ref{lem-spec}(2) to $f - c_f$ yields a reverse inequality.
\begin{lem} \label{lem-rev}
    Let $K\in\mathcal{K}_{+}^2$. For any $f\in C^2(\mathbb{S}^{n-1})$, we have
    \begin{equation}
    \begin{split}
        &\int_{\mathbb{S}^{n-1}} f(-L_K f)\, dV_K - \int_{\mathbb{S}^{n-1}} f^2\, dV_K + \frac{(\int_{\mathbb{S}^{n-1}} f dV_K)^2}{V(K)} \\
        &\leq \frac{1}{\lambda_2(-L_K)-1}\left(\int_{\mathbb{S}^{n-1}} (L_K f + f)^2 dV_K - \frac{(\int_{\mathbb{S}^{n-1}} f dV_K)^2}{V(K)}\right) \\
        &= \frac{1}{\lambda_2(-L_K)-1}\left(\delta_2^{V_K}(f, c_f - L_K f)\right)^2,
    \end{split}
    \end{equation}
     where $c_f$ is given by (\ref{cf}).
\end{lem}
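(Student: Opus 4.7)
The plan is to reduce this reverse inequality to the spectral statement Lemma \ref{lem-spec}(2) by shifting $f$ to have zero mean against $V_K$. Set $g := f - c_f$ with $c_f$ as in (\ref{cf}); then $\int_{\mathbb{S}^{n-1}} g \, dV_K = 0$, and since constants lie in $\ker(L_K)$ we have $L_K g = L_K f$. Applying Lemma \ref{lem-spec}(2) to $g$ immediately yields
\begin{equation*}
\int_{\mathbb{S}^{n-1}} (L_K f + g)^2\, dV_K \;\geq\; (\lambda_2(-L_K) - 1)\int_{\mathbb{S}^{n-1}} g\,(-L_K f - g)\, dV_K,
\end{equation*}
so the proof reduces to expanding both sides in terms of $f$.

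The key auxiliary identity is $\int_{\mathbb{S}^{n-1}} L_K f \, dV_K = 0$, which follows from the self-adjointness of $L_K$ on $L^2(V_K)$ (Theorem \ref{thm-LK}(1)) together with $L_K 1 = 0$. Using this to collapse every cross-term involving $c_f$, the left-hand side becomes
\begin{equation*}
\int_{\mathbb{S}^{n-1}}(L_K f + f)^2\, dV_K - \frac{\bigl(\int_{\mathbb{S}^{n-1}} f\, dV_K\bigr)^2}{V(K)},
\end{equation*}
while the right-hand side expands to $(\lambda_2(-L_K) - 1)\bigl(\int f(-L_K f)\, dV_K - \int f^2\, dV_K + \frac{(\int f\, dV_K)^2}{V(K)}\bigr)$. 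Dividing by $\lambda_2(-L_K) - 1 > 0$ (strictly positive by Theorem \ref{thm-LK}(2)(iii)) gives the claimed inequality, and the identification of the right-hand side with $\frac{1}{\lambda_2(-L_K)-1}(\delta_2^{V_K}(f, c_f - L_K f))^2$ is just the same expansion read in reverse, using once more that $L_K c_f = 0$.

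Conceptually this is the spectral dual of Lemma \ref{lem-sta}: decomposing $f = c_f + \ell_{v_f}^K + \sum_{\lambda \geq \lambda_2} f_\lambda$ in eigenfunctions of $-L_K$, the assertion reduces to the elementary numerical bound $\lambda - 1 \leq \frac{(\lambda - 1)^2}{\lambda_2 - 1}$ on each eigenspace with $\lambda \geq \lambda_2$, which is the reverse of the bound $\lambda - 1 \geq \lambda_2 - 1$ that underlies Lemma \ref{lem-sta}. I do not expect any genuine obstacle here: once the test function $g = f - c_f$ is identified, the verification is routine algebraic bookkeeping of the constant-shift contributions, relying only on $L_K 1 = 0$ and the self-adjoint pairing on $L^2(V_K)$.
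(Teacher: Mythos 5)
Your proof is correct and follows exactly the route the paper indicates (the paper's entire proof of this lemma is the one-line remark that one applies Lemma \ref{lem-spec}(2) to $f - c_f$). You supply the expansions the paper leaves implicit, including the key observation $\int_{\mathbb{S}^{n-1}} L_K f\, dV_K = 0$ from self-adjointness and $L_K 1 = 0$, and those computations check out.
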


Combining these results, we derive the following corollary.
\begin{cor}
    Let $K\in\mathcal{K}_{+}^2$. For any $f\in C^2(\mathbb{S}^{n-1})$, we have
    \begin{align}
        \delta_2^{V_K}(f, c_f - L_K f) \geq (\lambda_2(-L_K)-1) \delta_2^{V_K}(f, c_f + \ell_{v_f}^K),
    \end{align}
    with $c_f$ and $v_f$ as in (\ref{cf}) and (\ref{vf}).
\end{cor}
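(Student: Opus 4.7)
The plan is to deduce the corollary simply by chaining the two inequalities in Lemma \ref{lem-sta} and Lemma \ref{lem-rev}. Both lemmas concern the same quadratic form
\[
Q(f):=\int_{\mathbb{S}^{n-1}} f(-L_K f)\,dV_K-\int_{\mathbb{S}^{n-1}} f^2\,dV_K+\frac{\bigl(\int_{\mathbb{S}^{n-1}} f\,dV_K\bigr)^2}{V(K)},
\]
the first providing the lower bound $Q(f)\ge (\lambda_2(-L_K)-1)\bigl(\delta_2^{V_K}(f,c_f+\ell_{v_f}^K)\bigr)^2$ and the second the upper bound $Q(f)\le \frac{1}{\lambda_2(-L_K)-1}\bigl(\delta_2^{V_K}(f,c_f-L_K f)\bigr)^2$.

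First I would observe that $\lambda_2(-L_K)>1$ by Theorem \ref{thm-LK}(2)(iii), so the factor $\lambda_2(-L_K)-1$ is strictly positive and may be freely multiplied, divided, and square-rooted. Next I would combine the two bounds to get
\[
(\lambda_2(-L_K)-1)\bigl(\delta_2^{V_K}(f,c_f+\ell_{v_f}^K)\bigr)^2\le Q(f)\le \frac{1}{\lambda_2(-L_K)-1}\bigl(\delta_2^{V_K}(f,c_f-L_K f)\bigr)^2,
\]
which rearranges to
\[
(\lambda_2(-L_K)-1)^2\bigl(\delta_2^{V_K}(f,c_f+\ell_{v_f}^K)\bigr)^2\le \bigl(\delta_2^{V_K}(f,c_f-L_K f)\bigr)^2.
\]
Taking square roots of both non-negative sides yields the asserted inequality.

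There is no genuine obstacle: the work has already been done in Lemmas \ref{lem-sta} and \ref{lem-rev}, whose decomposition $f=c_f+\ell_{v_f}^K+\tilde f$ and applications of Lemma \ref{lem-spec}(1)(2) provide the matching lower and upper bounds. The only points worth checking carefully are (i) that the same constants $c_f$ and $v_f$ from (\ref{cf}) and (\ref{vf}) are used in both lemmas, so the chain of inequalities is consistent, and (ii) that $\lambda_2(-L_K)>1$ to legitimize the multiplication by $\lambda_2(-L_K)-1$ and the subsequent square-rooting. Both are already established in the preceding section.
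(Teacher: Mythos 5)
Your proposal is correct and is exactly the argument the paper intends: the corollary is stated immediately after Lemma \ref{lem-sta} and Lemma \ref{lem-rev} with the phrase ``Combining these results,'' and your chaining of the lower bound from Lemma \ref{lem-sta} with the upper bound from Lemma \ref{lem-rev}, followed by using $\lambda_2(-L_K)-1>0$ to rearrange and take square roots, is precisely that combination.
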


\section{Uniqueness}\label{sec:3}

\subsection{Main lemmas}$\ $

Let $\{E_l\}_{l=1}^{n}$ be an orthonormal basis of $\mathbb{R}^n$. We consider the test functions as follows:
\begin{align*}
    \tilde{f}_l(x)=\langle X_K(x),E_l\rangle,\ x\in\mathbb{S}^n,\ l=1,\ldots,n,
\end{align*}
where $X_K(x)=h_K(x)x+\nabla h_K(x)$.
Let $v_l:=v_{\tilde{f}_l}\in\mathbb{R}^n$ be the unique vector such that
\begin{align}\label{vl}
     \int_{\mathbb{S}^{n-1}} \frac{\langle x, v_l \rangle}{h_K^2(x)} x \, dV_K(x)
     =\int_{\mathbb{S}^{n-1}} \frac{\langle X_K(x),E_l\rangle}{h_K(x)} x \, dV_K(x),\ l=1,\ldots,n.
\end{align}
By Lemma \ref{lem-sta}, we obtain
\begin{lem}\label{lem-key1}
    Let $K\in \mathcal{K}_+^2$. Then
    \begin{equation}\label{ineq-XK-sta}
    \begin{split}
        & \int_{\mathbb{S}^{n-1}} h_K \left(\frac{1}{n-1}\Delta h_K + h_K\right) dV_K- \int_{\mathbb{S}^{n-1}} |X_K|^2 dV_K +\frac{\left| \int_{\mathbb{S}^{n-1}} X_K dV_K \right|^2}{V(K)}\\
         \geq\, & (\lambda_2(-L_K)-1)\left( \int_{\mathbb{S}^{n-1}} |X_K|^2 dV_K -\frac{\left| \int_{\mathbb{S}^{n-1}} X_K dV_K \right|^2}{V(K)}-\int_{\mathbb{S}^{n-1}}\frac{\sum_{l}\langle x,v_l\rangle^2}{h_K^2} dV_K\right).
    \end{split}
    \end{equation}
\end{lem}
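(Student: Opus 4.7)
The plan is to apply Lemma \ref{lem-sta} to the $n$ scalar test functions $\tilde{f}_l(x) := \langle X_K(x), E_l\rangle$ for $l=1,\ldots,n$ and then sum the resulting inequalities. For each $l$, the associated constant $c_{\tilde{f}_l}$ is the $l$-th coordinate of the weighted centroid $\frac{1}{V(K)}\int_{\mathbb{S}^{n-1}} X_K\,dV_K$, while the associated $v_{\tilde{f}_l}$ coincides with the vector $v_l$ of (\ref{vl}), by direct comparison of (\ref{vf}) with (\ref{vl}). Summing over $l$ and using the orthonormality of $\{E_l\}$, the zeroth-order pieces collapse via the pointwise identities
\begin{align*}
\sum_{l=1}^n \tilde{f}_l^2 = |X_K|^2, \quad \sum_{l=1}^n \left(\int_{\mathbb{S}^{n-1}}\tilde{f}_l\,dV_K\right)^{\!2} = \left|\int_{\mathbb{S}^{n-1}} X_K\,dV_K\right|^2, \quad \sum_{l=1}^n (\ell_{v_l}^K)^2 = \frac{\sum_l \langle x,v_l\rangle^2}{h_K^2},
\end{align*}
which reproduce exactly the bracketed expression on the right-hand side of (\ref{ineq-XK-sta}).

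It remains to identify the sum of Dirichlet terms. By the symmetric bilinear representation of $-L_K$ in Theorem \ref{thm-LK}(1),
\begin{align*}
\sum_{l=1}^n \int_{\mathbb{S}^{n-1}} \tilde{f}_l(-L_K\tilde{f}_l)\,dV_K = \frac{1}{n-1}\int_{\mathbb{S}^{n-1}} h_K\,((D^2 h_K)^{-1})^{ij}\sum_{l=1}^n (\tilde{f}_l)_i(\tilde{f}_l)_j\,dV_K,
\end{align*}
so the only point to check is that the inner sum equals $((D^2 h_K)^2)_{ij}$. For this I would differentiate $X_K = h_K x + \nabla h_K$ tangentially on $\mathbb{S}^{n-1}$, using the second fundamental form of the sphere (i.e. $\bar{\nabla}_{e_i} e_k = -\delta_{ik}x$) to handle $\bar{\nabla}(\nabla h_K)$, obtaining the standard Jacobian formula $dX_K(e_i) = \sum_k (D^2 h_K)_{ik} e_k$. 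Consequently $\sum_l (\tilde{f}_l)_i(\tilde{f}_l)_j = \langle dX_K(e_i), dX_K(e_j)\rangle = ((D^2 h_K)^2)_{ij}$, and contracting with $((D^2 h_K)^{-1})^{ij}$ yields $\tr(D^2 h_K) = \Delta h_K + (n-1)h_K$. Dividing by $n-1$ produces precisely $h_K\bigl(\frac{1}{n-1}\Delta h_K + h_K\bigr)$ inside the integral on the left of (\ref{ineq-XK-sta}).

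The argument parallels the derivation of the unsharpened inequality (\ref{ineq-XK}) from Lemma \ref{key-lem} in \cite{IM23}; the only non-routine ingredient is the Jacobian formula for the inverse Gauss map together with the matrix cancellation $\tr((D^2 h_K)^{-1}(D^2 h_K)^2)=\tr(D^2 h_K)$. I do not foresee a genuine obstacle: Lemma \ref{lem-key1} is the direct stability upgrade of (\ref{ineq-XK}) obtained by feeding the same test functions into the sharpened local Brunn-Minkowski inequality of Lemma \ref{lem-sta} and summing over an orthonormal basis.
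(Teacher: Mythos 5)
Your proposal is correct and follows essentially the same approach as the paper: apply Lemma \ref{lem-sta} to the coordinate functions $\tilde{f}_l = \langle X_K,E_l\rangle$, sum over $l$, and reduce the Dirichlet term to $\int h_K \tr(D^2 h_K)\,dV_K$. The only cosmetic difference is that you compute the Dirichlet term via the Jacobian formula $dX_K(e_i)=\sum_k(D^2h_K)_{ik}e_k$ and the matrix cancellation $\tr\bigl((D^2h_K)^{-1}(D^2h_K)^2\bigr)=\tr(D^2h_K)$, whereas the paper uses the same Jacobian relation but evaluates it in a frame diagonalizing $D^2 h_K$ at a point; the two computations are equivalent.
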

\begin{proof}
     Applying Lemma \ref{lem-sta} to $\tilde{f}_l$ and summing over $l$, by $\ell_{v_{l}}^K(x)=\frac{\langle x,v_l\rangle}{h_K(x)}$, we get
     \begin{align*}
          & \int_{\mathbb{S}^{n-1}} \sum_l \tilde{f}_l(-L_K \tilde{f}_l)  dV_K- \int_{\mathbb{S}^{n-1}} \sum_l\langle X_K,E_l\rangle^2 dV_K +\frac{\sum_l( \int_{\mathbb{S}^{n-1}} \langle X_K,E_l\rangle dV_K )^2}{V(K)}\\
          =&  \frac{1}{n-1}\int_{\mathbb{S}^{n-1}} \sum_{i,j,l} h_K ((D^2 h_K)^{-1})^{ij}(\tilde{f}_l)_i(\tilde{f}_l)_j dV_K- \int_{\mathbb{S}^{n-1}} |X_K|^2 dV_K +\frac{\left|\int_{\mathbb{S}^{n-1}} X_K dV_K \right|^2}{V(K)}\\
         \geq\, & (\lambda_2(-L_K)-1)\left( \int_{\mathbb{S}^{n-1}} |X_K|^2 dV_K -\frac{\left| \int_{\mathbb{S}^{n-1}} X_K dV_K \right|^2}{V(K)}-\int_{\mathbb{S}^{n-1}}\frac{\sum_{l}\langle x,v_l\rangle^2}{h_K^2} dV_K\right).
     \end{align*}
      Suppose $\{e_i\}_{i=1}^{n-1}$ is a local orthonormal frame for $\mathbb{S}^{n-1}$ such that $D^2_{ij}h_K(z_0)=\lambda_i(z_0)\delta_{ij}$. Note that $\nabla_i X_K=\sum_j D_{ij}^2 h_K e_j=\lambda_i e_i$ and $(\tilde{f}_l)_i=\lambda_i\langle e_i,E_l\rangle$ at $z_0$. Thus, we have
      \begin{align*}
           &\int_{\mathbb{S}^{n-1}}\sum_{i,j,l} h_K ((D^2 h_K)^{-1})^{ij}(\tilde{f}_l)_i(\tilde{f}_l)_j dV_K
          =\int_{\mathbb{S}^{n-1}}\sum\limits_{i,l}h_K \lambda_i\langle e_i,E_l\rangle^2 dV_K\\
          = &\int_{\mathbb{S}^{n-1}} h_K \tr(D^2h_K) dV_K
          =\int_{\mathbb{S}^{n-1}} h_K (\Delta h_K+(n-1)h_K) dV_K.
      \end{align*}
      This completes the proof of Lemma \ref{lem-key1}.
\end{proof}

If $K$ is $S_2$-isotropic, we have
\begin{align*}
    \int_{\mathbb{S}^{n-1}} \frac{\langle x, v_l \rangle}{h_K^2(x)} \langle x,w\rangle \, dV_K(x)
    =\frac{1}{n}\int_{\mathbb{S}^{n-1}} \langle x,v_l\rangle \langle x,w\rangle dS_2K(x)
    =\frac{\|S_2K\|}{n^2}\langle v_l,w\rangle, \quad \forall w \in\mathbb{R}^n.
\end{align*}
It follows from the divergence theorem that
\begin{align*}
    &\int_{\mathbb{S}^{n-1}} \frac{\langle X_K(x),E_l\rangle}{h_K(x)} \langle x,w\rangle \, dV_K(x)
    =\frac{1}{n}\int_{\partial K} \langle X,E_l\rangle \langle\nu(X),w\rangle\ d\mu_{\partial K}(X)\\
    =&\frac{1}{n}\int_{K} \text{div}_{\mathbb{R}^n}(\langle X,E_l\rangle w) \, dX
    =\frac{V(K)}{n}\langle E_l,w\rangle, \quad \forall w \in\mathbb{R}^n.
\end{align*}
Consequently, by (\ref{vl}), there holds
\begin{align}
    v_l=\frac{n V(K)}{\|S_2K\|}E_l,\ l=1,\ldots,n.
\end{align}
Using $|X_K|^2=h_K^2+|\nabla h_K|^2$ and the Cauchy-Schwarz inequality, we obtain

\begin{lem}\label{lem-key2}
    Let $K\in \mathcal{K}_+^2$ be $S_2$-isotropic. Then
    \begin{equation}\label{ineq-XK-sta-2}
    \begin{split}
        & \int_{\mathbb{S}^{n-1}} \left(\frac{1}{n-1}h_K\Delta h_K-|\nabla h_K|^2\right) dV_K+\frac{\left| \int_{\mathbb{S}^{n-1}} X_K dV_K \right|^2}{V(K)}\\
         \geq\, & (\lambda_2(-L_K)-1)\left( \int_{\mathbb{S}^{n-1}} (h_K^2+|\nabla h_K|^2) dV_K -\frac{\left| \int_{\mathbb{S}^{n-1}} X_K dV_K \right|^2}{V(K)}-\frac{nV(K)^2}{\|S_2K\|}\right)\\
         \geq\, & (\lambda_2(-L_K)-1)\left( \int_{\mathbb{S}^{n-1}} |\nabla h_K|^2 dV_K -\frac{\left| \int_{\mathbb{S}^{n-1}} X_K dV_K \right|^2}{V(K)}\right),
    \end{split}
    \end{equation}
    where the last inequality holds with equality if and only if $K$ is an origin-centred ball.
\end{lem}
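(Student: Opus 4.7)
The plan is to apply Lemma \ref{lem-key1} to the $S_2$-isotropic body $K$, using the identified vectors $v_l = \frac{nV(K)}{\|S_2K\|}E_l$, and then reduce both inequalities to straightforward algebraic manipulations plus one application of Cauchy-Schwarz.

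First, I would rewrite both sides of the conclusion of Lemma \ref{lem-key1} using the identity $|X_K|^2 = h_K^2 + |\nabla h_K|^2$. On the left-hand side, expanding $\int h_K(\tfrac{1}{n-1}\Delta h_K + h_K)\,dV_K$ and subtracting $\int(h_K^2+|\nabla h_K|^2)\,dV_K$ causes the $\int h_K^2\,dV_K$ terms to cancel, leaving exactly the expression $\int(\tfrac{1}{n-1}h_K\Delta h_K - |\nabla h_K|^2)\,dV_K + \frac{|\int X_K\,dV_K|^2}{V(K)}$ appearing on the LHS of (\ref{ineq-XK-sta-2}).

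Next, I would evaluate the extra term $\int \frac{\sum_l \langle x, v_l\rangle^2}{h_K^2}\,dV_K$ on the RHS of Lemma \ref{lem-key1}. Since $\{E_l\}$ is an orthonormal basis of $\mathbb{R}^n$ and $|x|=1$ on $\mathbb{S}^{n-1}$, the formula $v_l = \frac{nV(K)}{\|S_2K\|}E_l$ yields $\sum_l \langle x, v_l\rangle^2 = \frac{n^2 V(K)^2}{\|S_2K\|^2}$. The key observation is that $\frac{dV_K}{h_K^2} = \frac{1}{n h_K^2}\cdot h_K\det(D^2h_K)\,d\mu = \frac{1}{n}\,dS_2K$, so $\int \frac{dV_K}{h_K^2} = \frac{\|S_2K\|}{n}$, and therefore the term equals $\frac{nV(K)^2}{\|S_2K\|}$. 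Substituting produces the first inequality of (\ref{ineq-XK-sta-2}).

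For the second inequality, after cancellations it suffices to show $\int_{\mathbb{S}^{n-1}} h_K^2\,dV_K \geq \frac{nV(K)^2}{\|S_2K\|}$. I would obtain this from Cauchy-Schwarz by writing $V(K) = \int_{\mathbb{S}^{n-1}} h_K \cdot h_K^{-1}\,dV_K$, which gives
\begin{equation*}
V(K)^2 \leq \int_{\mathbb{S}^{n-1}} h_K^2\,dV_K \cdot \int_{\mathbb{S}^{n-1}} h_K^{-2}\,dV_K = \int_{\mathbb{S}^{n-1}} h_K^2\,dV_K \cdot \frac{\|S_2K\|}{n}.
\end{equation*}
Equality in Cauchy-Schwarz forces $h_K$ to be constant on $\mathbb{S}^{n-1}$, which is equivalent to $K$ being an origin-centered ball, giving the stated equality characterization. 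There is no real obstacle here; the proof is a direct substitution and a single Cauchy-Schwarz step, with the main bookkeeping being the reduction $\frac{dV_K}{h_K^2} = \frac{1}{n}dS_2K$ that ties the $S_2$-isotropy to the integrals appearing in Lemma \ref{lem-key1}.
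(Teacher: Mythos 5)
Your proof is correct and follows essentially the same route as the paper: apply Lemma \ref{lem-key1}, substitute $|X_K|^2 = h_K^2 + |\nabla h_K|^2$, use the explicit form $v_l = \frac{nV(K)}{\|S_2K\|}E_l$ together with $\frac{dV_K}{h_K^2} = \frac{1}{n}\,dS_2K$, and finish the second inequality with Cauchy--Schwarz on $V(K) = \int h_K\cdot h_K^{-1}\,dV_K$, whose equality case gives $h_K$ constant, i.e.\ $K$ an origin-centred ball. This is exactly the computation the paper leaves implicit in the line preceding the lemma.
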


\subsection{Proof of Theorem \ref{thm-unique-o} and Theorem \ref{thm-unique}}$\ $

\begin{proof}[Proof of Theorem \ref{thm-unique-o}]
    It follows from (\ref{is-p-M-eq}) that $dV_K=\frac{1}{n}h_K^p d\mu$. Integration by parts yields
   \begin{align*}
        &\int_{\mathbb{S}^{n-1}} h_K\Delta h_K dV_K 
        =\frac{1}{n}\int_{\mathbb{S}^{n-1}} h_K^{p+1}\Delta h_K d\mu\\
        =& -\frac{p+1}{n} \int_{\mathbb{S}^{n-1}} h_K^{p}|\nabla h_K|^2 d\mu
        =-(p+1)\int_{\mathbb{S}^{n-1}} |\nabla h_K|^2 dV_K.
   \end{align*}
   
   Since $K$ is origin-centred, applying the divergence theorem, we have
   \begin{align*}
       &\int_{\mathbb{S}^{n-1}} \langle X_K(x),w\rangle dV_K(x)
       =\frac{1}{n}\int_{\partial K} \langle X,w\rangle \langle \nu(X),X\rangle d\mu_{\partial K}(X)\\
       =&\frac{1}{n}\int_{K}\text{div}_{\mathbb{R}^n}(\langle X,w\rangle X) dX
       =\frac{n+1}{n}\int_{K} \langle X,w\rangle dX=0,\quad\forall w\in\mathbb{R}^n.
   \end{align*}
   Thus $\int_{\mathbb{S}^{n-1}} X_K dV_K=0$. By Lemma \ref{lem-key2}, we obtain
   \begin{align*}
        & \left(\frac{-p-1}{n-1}-1\right)\int_{\mathbb{S}^{n-1}} |\nabla h_K|^2 dV_K\geq  (\lambda_2(-L_K)-1) \int_{\mathbb{S}^{n-1}} |\nabla h_K|^2 dV_K.
   \end{align*}
   Combining the condition $\lambda_2(-L_K)\geq \frac{-p-1}{n-1}$, the above inequality becomes equality. Since the inequalities in Lemma \ref{lem-key2} hold with equality, then $K$ is an origin-centred ball. The proof is completed. 
\end{proof}

\begin{proof}[Proof of Theorem \ref{thm-unique}]
   In the general case, by the divergence theorem and $dV_K=\frac{1}{n}h_K^p d\mu$, 
    \begin{align*}
       \int_{\mathbb{S}^{n-1}}  X_K dV_K
       =\frac{n+p}{n(n-1)}\int_{\mathbb{S}^{n-1}}
       h_K^p\nabla h_K d\mu
       =\frac{n+p}{n-1}\int_{\mathbb{S}^{n-1}} \nabla h_K dV_K.
   \end{align*}
   Using the Cauchy-Schwarz inequality, we get
   \begin{align*}
       \frac{|\int_{\mathbb{S}^{n-1}} X_K dV_K|^2}{V(K)}=\left(\frac{n+p}{n-1}\right)^2 \frac{|\int_{\mathbb{S}^{n-1}} \nabla h_K dV_K|^2}{V(K)}\leq \left(\frac{n+p}{n-1}\right)^2\int_{\mathbb{S}^{n-1}} |\nabla h_K|^2 dV_K.
   \end{align*}
   From Lemma \ref{lem-key2}, it follows that
   \begin{align*}
        &\frac{-p-1}{n-1}\int_{\mathbb{S}^{n-1}} |\nabla h_K|^2 dV_K\\
         \geq\, & \lambda_2(-L_K)\left( \int_{\mathbb{S}^{n-1}} |\nabla h_K|^2 dV_K -\frac{\left| \int_{\mathbb{S}^{n-1}} X_K dV_K \right|^2}{V(K)}\right)\\
         \geq\, & \lambda_2(-L_K)\left( \int_{\mathbb{S}^{n-1}} |\nabla h_K|^2 dV_K -\left(\frac{n+p}{n-1}\right)^2\int_{\mathbb{S}^{n-1}} |\nabla h_K|^2 dV_K\right)\\
          =\, & \frac{(-p-1)(2n-1+p)}{(n-1)^2}\lambda_2(-L_K) \int_{\mathbb{S}^{n-1}} |\nabla h_K|^2 dV_K.
   \end{align*}
   Combining the condition $\lambda_2(-L_K)\geq \frac{n-1}{2n-1+p}>1$, the above inequalities become equalities. Hence $K$ is an origin-centred ball. We complete the proof of Theorem \ref{thm-unique}.
\end{proof}

\section{Stability}\label{sec:4}

\subsection{Stability of Minkowski's second inequality}$\ $

In this subsection, we consider the test function $f=\frac{h_L}{h_K}$ for given $K,L\in \mathcal{K}_+^2$. By (\ref{Wk}), the local Brunn-Minkowski inequality (\ref{loc-BM}) implies
\begin{align*}
    0\leq\, &\int_{\mathbb{S}^{n-1}} f(-L_K f) dV_K-\int_{\mathbb{S}^{n-1}} f^2\, dV_K +\frac{(\int_{\mathbb{S}^{n-1}} f\, dV_K)^2}{V(K)}\\
     =\, & -\int_{\mathbb{S}^{n-1}} \frac{h_L}{h_K}\left(\tilde{L}_K \frac{h_L}{h_K}\right) dV_K +\frac{V(K[n-1],L[1])^2}{V(K)}\\
     =\, &\frac{V(K[n-1],L[1])^2}{V(K)}-V(K[n-2],L[2]),
\end{align*}
which recovers Minkowski's second inequality. 

To apply Lemma \ref{lem-sta}, by (\ref{cf}) and (\ref{vf}), we have
\begin{align*}  
    c_f=\frac{\int_{\mathbb{S}^{n-1}} f dV_{K}}{V(K)}=\frac{V(K[n-1],L[1])}{V(K)}>0,  
\end{align*}  
and $v_f\in\mathbb{R}^n$ satisfies
\begin{align*}  
     \int_{\mathbb{S}^{n-1}} \frac{\langle x,X_L(x)- v_f \rangle}{h_K^2(x)} x \, dV_K(x)=0.  
\end{align*}   
Recall from Section \ref{sec:1} that the extended homothetic transform of $K$ relative to $L$ is $\widetilde{K}[L] = c_f K + v_f$, and the normalized homothetic copy of $L$ with respect to $K$ is $\bar{L}[K]= \frac{1}{c_f}(L - v_f)$, with $K$ and $L$ homothetic if and only if $\widetilde{K}[L] = L$ or equivalently $\bar{L}[K] = K$. Thus
\begin{align*}  
    \delta_2^{V_K}(f,c_f+\ell_{v_f}^K)=\frac{1}{\sqrt{n}}\delta_2^{S_2 K}(h_L,c_f h_K+\ell_{v_f}^K h_K)  
    =\frac{1}{\sqrt{n}}\delta_2^{S_2 K}(L,\widetilde{K}[L]).  
\end{align*}  
By Lemma \ref{lem-sta}, the following stability estimate for Minkowski's second inequality holds:
\begin{thm}\label{thm-sta-LK}
    Let $K,L\in \mathcal{K}_+^2$. Then  
    \begin{align}  \label{Min-sec-sta}
        \frac{V(K[n-1],L[1])^2}{V(K)}-V(K[n-2],L[2])  
        \geq \frac{1}{n}(\lambda_2(-L_K)-1)\left(\delta_2^{S_2 K}(L,\widetilde{K}[L])\right)^2.  
    \end{align}  
\end{thm}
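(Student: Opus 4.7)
The plan is to apply the stability form of the local Brunn-Minkowski inequality (Lemma \ref{lem-sta}) to the test function $f = h_L/h_K \in C^2(\mathbb{S}^{n-1})$ and rewrite every term in terms of mixed volumes and an $L^2(S_2 K)$-norm. First I would compute the left-hand side of the inequality in Lemma \ref{lem-sta} using the identities \eqref{Wk}. Since $-L_K = \mathrm{Id} - \tilde{L}_K$, the combination $\int f(-L_K f)\,dV_K - \int f^2\,dV_K$ collapses to $-\int f\, \tilde{L}_K f \, dV_K = -V(K[n-2],L[2])$, while $(\int f\,dV_K)^2/V(K)$ equals $V(K[n-1],L[1])^2/V(K)$. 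Thus the left-hand side of Lemma \ref{lem-sta} is already exactly the left-hand side of \eqref{Min-sec-sta}, and Minkowski's second inequality is recovered as a byproduct (taking $\lambda_2 - 1 \geq 0$).

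Next I would identify the constant $c_f$ and the vector $v_f$ appearing in Lemma \ref{lem-sta}. Formula \eqref{cf} gives $c_f = V(K[n-1],L[1])/V(K)$. For $v_f$, the key observation is that for $L \in \mathcal{K}_+^2$ one has $h_L(x) = \langle x, X_L(x)\rangle$, so the defining equation \eqref{vf} becomes
\begin{align*}
\int_{\mathbb{S}^{n-1}} \frac{\langle x, X_L(x) - v_f\rangle}{h_K^2(x)}\, x \, dV_K(x) = 0,
\end{align*}
which is precisely the equation defining the translation vector in the extended homothetic transform $\widetilde{K}[L] = c_f K + v_f$. Consequently, $(c_f + \ell_{v_f}^K)\, h_K = c_f h_K + \langle\cdot, v_f\rangle = h_{\widetilde{K}[L]}$.

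Finally, I would convert the $L^2(V_K)$-distance on the right-hand side of Lemma \ref{lem-sta} into an $L^2(S_2 K)$-distance. Combining $dV_K = (1/n) h_K \det(D^2 h_K)\, d\mu$ with $dS_2 K = h_K^{-1}\det(D^2 h_K)\, d\mu$ yields the change-of-measure identity $dV_K = (h_K^2/n)\, dS_2 K$. Therefore
\begin{align*}
\bigl(\delta_2^{V_K}(f,\, c_f + \ell_{v_f}^K)\bigr)^2 = \int_{\mathbb{S}^{n-1}} \left(\frac{h_L - h_{\widetilde{K}[L]}}{h_K}\right)^2 dV_K = \frac{1}{n}\bigl(\delta_2^{S_2 K}(L, \widetilde{K}[L])\bigr)^2,
\end{align*}
and Lemma \ref{lem-sta} gives \eqref{Min-sec-sta}. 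The only real obstacle is the conceptual step in the middle paragraph: recognizing that the minimizer of the quadratic form on the right-hand side of Lemma \ref{lem-sta} over the subspace spanned by the constants and linear functions (divided by $h_K$) reproduces exactly the extended homothetic transform. Once this identification is in place, the proof reduces to the routine bookkeeping described above.
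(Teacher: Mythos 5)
Your proposal is correct and follows essentially the same route as the paper: apply Lemma \ref{lem-sta} to $f = h_L/h_K$, use the identities in \eqref{Wk} to recognize the left-hand side as $V(K[n-1],L[1])^2/V(K) - V(K[n-2],L[2])$, identify $c_f$ and $v_f$ via \eqref{cf}, \eqref{vf} and the relation $h_L(x)=\langle x,X_L(x)\rangle$ so that $c_f h_K + \ell_{v_f}^K h_K = h_{\widetilde{K}[L]}$, and convert $\delta_2^{V_K}$ into $\frac{1}{\sqrt{n}}\delta_2^{S_2K}$ using $dV_K = \frac{h_K^2}{n}\,dS_2K$. All steps and constants check out.
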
  

When $K=B$, we obtain
\begin{align*}
    c_f=\frac{W_{n-1}(L)}{V(B)}=\frac{1}{2}w(L),\quad
    v_f=\frac{\int_{\mathbb{S}^{n-1}} h_L(x)xd\mu(x)}{V(B)}=s(L),
\end{align*}
where $w(L)$ is the mean width of $L$ and $s(L)$ is the Steiner point of $L$. Consequently, $\widetilde{B}[L]$ coincides with the Steiner ball of $L$, and (\ref{Min-sec-sta}) reduces to 
\begin{align}
    \frac{W_{n-1}(L)^2}{V(B)}-W_{n-2}(L)\geq \frac{n+1}{n(n-1)}(\delta_2^{\mu}(L,\widetilde{B}[L]))^2.
\end{align}
This further implies a stability result for the classical quermassintegral inequalities:
\begin{align*}
    W_j(L)^{n-i} \geq W_i(L)^{n-j} V(B)^{j-i}, \quad  0 \leq i < j < n,  
\end{align*} 
as discussed in \cite[Section 7.6]{Sch14}.  

$\ $

For the origin-symmetric case, if $K,L\in\mathcal{K}_{+,e}^2$, then $f=\frac{h_L}{h_K}$ is even and $v_f=0$, thereby allowing us to substitute $\lambda_2$ with $\lambda_{1,e}$:
 \begin{align*}  
        &\frac{V(K[n-1],L[1])^2}{V(K)}-V(K[n-2],L[2]) \\ 
        \geq\, & (\lambda_{1,e}(-L_K)-1)\left(\int_{\mathbb{S}^{n-1}} \frac{h_L^2}{h_K^2} dV_K -\frac{V(K[n-1],L[1])^2}{V(K)}\right).  
\end{align*}  
By (\ref{Wk}), we define
\begin{align*}  
    R_K(L):=\int_{\mathbb{S}^{n-1}} \frac{h_L}{h_K}\left(-L_K\frac{h_L}{h_K}\right)dV_K
    =\int_{\mathbb{S}^{n-1}} \frac{h_L^2}{h_K^2} dV_K-V(K[n-2],L[2]).
\end{align*}  
The properties of $-L_K$ imply that $R_K(L)\geq 0$ with equality if and only if $L=c_f K$. The above argument leads us back to \cite[Theorem 12.4]{KM22}:
\begin{thm}[\cite{KM22}]
    Let $K,L\in \mathcal{K}_{+,e}^2$. Then
    \begin{align}
        \frac{V(K[n-1],L[1])^2}{V(K)}-V(K[n-2],L[2])
        \geq \left(1-\frac{1}{\lambda_{1,e}(-L_K)}\right)R_K(L).
    \end{align}
    Moreover, if $K$ satisfies the local $L_p$-Brunn-Minkowski inequality for $p<1$, then
    \begin{align}
        \frac{V(K[n-1],L[1])^2}{V(K)}-V(K[n-2],L[2])
        \geq \frac{1-p}{n-p} R_K(L).
    \end{align}
\end{thm}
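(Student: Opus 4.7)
The plan is to apply the even spectral gap $\lambda_{1,e}(-L_K) > 1$ to the test function $z := h_L/h_K - c_f$ with $c_f := V(K[n-1],L[1])/V(K)$, and then to convert both sides of the resulting Rayleigh-quotient inequality into mixed volumes via \eqref{Wk}.

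First I would note that $f := h_L/h_K$ is even (since $K,L$ are origin-symmetric), and that \eqref{Wk} together with the self-adjointness of $\tilde L_K$ on $L^2(V_K)$ give $\int_{\mathbb{S}^{n-1}} f\,dV_K = V(K[n-1],L[1]) = c_f V(K)$, so $z = f - c_f$ has zero mean. Thus $z \in C_e^2(\mathbb{S}^{n-1})$ is admissible in the variational characterization of $\lambda_{1,e}(-L_K)$, producing
\[
\int_{\mathbb{S}^{n-1}} z(-L_K z)\,dV_K \;\geq\; \lambda_{1,e}(-L_K)\int_{\mathbb{S}^{n-1}} z^2\,dV_K.
\]

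Next I would evaluate both sides in closed form. Since $\tilde L_K 1 = 1$ we have $L_K 1 = 0$, and then self-adjointness reduces the left-hand side to $\int f(-L_K f)\,dV_K$, which by the very definition of $R_K(L)$ and \eqref{Wk} equals $R_K(L)$. For the right-hand side, expanding the square and using $\int f\,dV_K = c_f V(K)$ yields
\[
\int_{\mathbb{S}^{n-1}} z^2\,dV_K = \int_{\mathbb{S}^{n-1}} \frac{h_L^2}{h_K^2}\,dV_K - \frac{V(K[n-1],L[1])^2}{V(K)} = R_K(L) - M,
\]
where $M := V(K[n-1],L[1])^2/V(K) - V(K[n-2],L[2])$. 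Substituting then gives $R_K(L) \geq \lambda_{1,e}(-L_K)(R_K(L) - M)$, i.e.\ $M \geq (1 - 1/\lambda_{1,e}(-L_K))\,R_K(L)$, which is the first claim.

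For the second claim I would invoke the Kolesnikov--Milman equivalence recalled earlier: the local $L_p$-Brunn--Minkowski inequality for $K \in \mathcal{K}_{+,e}^2$ holds if and only if $\lambda_{1,e}(-L_K) \geq (n-p)/(n-1)$. Substituting this lower bound and simplifying $1 - (n-1)/(n-p) = (1-p)/(n-p)$ delivers the desired estimate. The main obstacle is conceptual rather than technical: one must spot that the \emph{mean-zero shift} $f - c_f$ is the correct test function, so that the Rayleigh inequality collapses via \eqref{Wk} to precisely $R_K(L)$ and $R_K(L)-M$ on the two sides; once this is seen, the remainder is a one-line algebraic manipulation.
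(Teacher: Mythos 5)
Your proposal is correct and is essentially the paper's argument: the paper applies Lemma~\ref{lem-sta} with $\lambda_2$ replaced by $\lambda_{1,e}$ (valid since $f=h_L/h_K$ is even and $v_f=0$), which internally performs exactly the mean-zero shift $z=f-c_f$ that you carry out explicitly, and then invokes the Kolesnikov--Milman equivalence for the second claim. The only small point you leave implicit is that $R_K(L)\geq 0$ (needed to pass from the lower bound on $\lambda_{1,e}$ to the final inequality), which the paper notes just before the theorem.
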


\subsection{Stability of a Brunn-Minkowski-type inequality for mixed volume ratios}$\ $

Let $\mathcal{C}=(K,K_2\ldots,K_{n-2})$ be an $(n-2)$-tuple of convex bodies in $\mathcal{K}_+^2$. For $L_1,L_2\in \mathcal{K}_+^2$, define the test function 
$$\hat{f}=\frac{h_{L_1}}{c_1 h_K}-\frac{h_{L_2}}{c_2 h_K},$$
where $c_i=\frac{V(L_i,K,\mathcal{C})}{V(K,K,\mathcal{C})}$ for $i=1,2$. A direct calculation gives
\begin{align*}
    \int_{\mathbb{S}^{n-1}} \hat{f}\, dV_{\mathcal{C}}
    =\frac{V(K,K,\mathcal{C})}{V(L_1,K,\mathcal{C})}\int_{\mathbb{S}^{n-1}} \frac{h_{L_1}}{h_K}dV_{\mathcal{C}}-\frac{V(K,K,\mathcal{C})}{V(L_2,K,\mathcal{C})}\int_{\mathbb{S}^{n-1}} \frac{h_{L_2}}{h_K}dV_{\mathcal{C}}=0.
\end{align*}
By applying (\ref{loc-AF}), we derive
\begin{align*}
     0\leq\, &\int_{\mathbb{S}^{n-1}} \hat{f}(-L_{\mathcal{C}} \hat{f})\, dV_{\mathcal{C}}-\int_{\mathbb{S}^{n-1}} \hat{f}^2\, dV_{\mathcal{C}} +\frac{(\int_{\mathbb{S}^{n-1}} \hat{f} dV_{\mathcal{C}})^2}{V(K,K,{\mathcal{C}})}\\
     =\, & -\int_{\mathbb{S}^{n-1}} \left(\frac{h_{L_1}}{c_1 h_K}-\frac{h_{L_2}}{c_2 h_K}\right)\tilde{L}_K \left(\frac{h_{L_1}}{c_1 h_K}-\frac{h_{L_2}}{c_2 h_K}\right)\, dV_K \\
     =\, & V(K,K,\mathcal{C})^2\left(\frac{2 V(L_1,L_2,\mathcal{C})}{V(L_1,K,\mathcal{C})V(L_2,K,\mathcal{C})}-\frac{V(L_1,L_1,\mathcal{C})}{V(L_1,K,\mathcal{C})^2}-\frac{V(L_2,L_2,\mathcal{C})}{V(L_2,K,\mathcal{C})^2}\right).
\end{align*}
This immediately provides a new proof of the following Brunn-Minkowski-type inequality:
\begin{thm}[\cite{Sch14}]
    For convex bodies $L_1,L_2$ and an $(n-2)$-tuple $\mathcal{C}=(K,K_2\ldots,K_{n-2})$ in $\mathcal{K}_+^2$, we have
    \begin{align}
        \frac{2 V(L_1,L_2,\mathcal{C})}{V(L_1,K,\mathcal{C})V(L_2,K,\mathcal{C})}
        \geq \frac{V(L_1,L_1,\mathcal{C})}{V(L_1,K,\mathcal{C})^2}+\frac{V(L_2,L_2,\mathcal{C})}{V(L_2,K,\mathcal{C})^2}.
    \end{align}
    Equivalently, under Minkowski addition $L_1+L_2$,
    \begin{align}
        \frac{V(L_1+L_2,L_1+L_2,\mathcal{C})}{V(L_1+L_2,K,\mathcal{C})}
        \geq \frac{V(L_1,L_1,\mathcal{C})}{V(L_1,K,\mathcal{C})}+\frac{V(L_2,L_2,\mathcal{C})}{V(L_2,K,\mathcal{C})}.
    \end{align}
    Equality holds if and only if $L_1=\frac{c_1}{c_2}L_2+v$ for some $v\in\mathbb{R}^n$, i.e. $L_1$ and $L_2$ are homothetic.
\end{thm}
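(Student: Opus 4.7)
The plan is to apply the Alexandrov--Fenchel-based local inequality, Lemma \ref{key-lem-AF}, to a test function built from $h_{L_1}/h_K$ and $h_{L_2}/h_K$, normalized so that its mean against $dV_{\mathcal{C}}$ vanishes. Two bookkeeping identities will suffice: since $\tilde{L}_{\mathcal{C}}(1)\equiv 1$ by definition, the representation $V(f,g,\mathcal{C}) = \int (f/h_K)\,\tilde{L}_{\mathcal{C}}(g/h_K)\,dV_{\mathcal{C}}$ specialized to $g = h_K$ and $g = h_{L_j}$ respectively gives
\[
\int_{\mathbb{S}^{n-1}} \frac{h_L}{h_K}\,dV_{\mathcal{C}} = V(L,K,\mathcal{C}), \qquad \int_{\mathbb{S}^{n-1}} \frac{h_{L_1}}{h_K}\,\tilde{L}_{\mathcal{C}}\!\left(\frac{h_{L_2}}{h_K}\right) dV_{\mathcal{C}} = V(L_1,L_2,\mathcal{C}).
\]
Setting $c_i = V(L_i,K,\mathcal{C})/V(K,K,\mathcal{C})$ for $i=1,2$ and $\hat{f} = h_{L_1}/(c_1 h_K) - h_{L_2}/(c_2 h_K)$, the first identity forces $\int_{\mathbb{S}^{n-1}} \hat{f}\,dV_{\mathcal{C}} = 0$, so Lemma \ref{key-lem-AF} applies to $\hat{f}$.

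Next I would plug $\hat{f}$ into the inequality $\int \hat{f}(-L_{\mathcal{C}}\hat{f})\,dV_{\mathcal{C}} \geq \int \hat{f}^2\,dV_{\mathcal{C}}$. Rewriting the left-hand side via $-L_{\mathcal{C}} = \mathrm{Id} - \tilde{L}_{\mathcal{C}}$, the inequality collapses to $-\int \hat{f}\,\tilde{L}_{\mathcal{C}}(\hat{f})\,dV_{\mathcal{C}} \geq 0$. Expanding $\hat{f}$ bilinearly and applying the second identity above to each of the four resulting cross-terms yields
\[
V(K,K,\mathcal{C})^{2}\!\left(\frac{2\,V(L_1,L_2,\mathcal{C})}{V(L_1,K,\mathcal{C})\,V(L_2,K,\mathcal{C})} - \frac{V(L_1,L_1,\mathcal{C})}{V(L_1,K,\mathcal{C})^{2}} - \frac{V(L_2,L_2,\mathcal{C})}{V(L_2,K,\mathcal{C})^{2}}\right) \geq 0,
\]
which is the first stated inequality. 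The equivalent Minkowski-addition form follows by expanding $V(L_1+L_2,L_1+L_2,\mathcal{C})$ and $V(L_1+L_2,K,\mathcal{C})$ multilinearly and clearing denominators.

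For equality, I would invoke the equality clause of Lemma \ref{key-lem-AF}: $\hat{f}(x) = \langle x,v\rangle/h_K(x)$ for some $v \in \mathbb{R}^n$. Multiplying through by $h_K$ yields $h_{L_1}/c_1 - h_{L_2}/c_2 = \langle\,\cdot\,,v\rangle$ on $\mathbb{S}^{n-1}$, and since $\langle\,\cdot\,,v\rangle$ is the support function of the point $\{v\}$, this rearranges to $L_1 = (c_1/c_2)\,L_2 + c_1 v$; that is, $L_1$ and $L_2$ are homothetic. The only technical step is the bilinear bookkeeping in expanding $\int \hat{f}\,\tilde{L}_{\mathcal{C}}(\hat{f})\,dV_{\mathcal{C}}$ into the four mixed volumes -- there is no deep obstacle here, the real content being the \emph{choice} of the normalized-difference test function $\hat{f}$, which is tailor-made so that Lemma \ref{key-lem-AF} produces exactly the four-term expression.
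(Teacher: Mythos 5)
Your proposal is correct and follows essentially the same route as the paper: same normalized-difference test function $\hat{f} = h_{L_1}/(c_1 h_K) - h_{L_2}/(c_2 h_K)$, same verification that $\int \hat{f}\, dV_{\mathcal{C}} = 0$, same application of Lemma \ref{key-lem-AF}, and the same bilinear expansion into the four mixed volumes followed by the multilinear rearrangement to get the Minkowski-addition form; the equality analysis via $\hat{f} = \ell_v^K$ also matches. The only cosmetic difference is that you derive the mean-zero property by invoking $\tilde{L}_{\mathcal{C}}(1)\equiv 1$, whereas the paper computes $\int \hat{f}\, dV_{\mathcal{C}}$ directly by cancellation, but these are the same bookkeeping.
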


When $\mathcal{C}$ is chosen as $(K,\ldots,K)$, let $v_i\in\mathbb{R}^n$ satisfy
\begin{align*}
    \int_{\mathbb{S}^{n-1}}\frac{\langle x, X_{L_i}(x)-v_i\rangle}{h_K^2(x)}x\, dV_K(x)=0,\quad i=1,2.
\end{align*}
It follows that $ v_{\hat{f}}=\frac{v_1}{c_1}-\frac{v_2}{c_2}$. Applying Lemma \ref{lem-sta}, we derive the following theorem.
\begin{thm}\label{thm-sta-L12K}
     Let $K,L_1,L_2\in \mathcal{K}_+^2$. Then 
     \begin{equation}
     \begin{split}
          &\frac{2 V(L_1[1],L_2[1],K[n-2])}{V(L_1[1],K[n-1])V(L_2[1],K[n-1])}-\frac{V(L_1[2],K[n-2])}{V(L_1[1],K[n-1])^2}-\frac{V(L_2[2],K[n-2])}{V(L_2[1],K[n-1])^2}\\
         \geq\,& \frac{1}{n}(\lambda_2(-L_K)-1)\left(\frac{\delta_2^{S_2 K}(\bar{L}_1[K],\bar{L}_2[K])}{V(K)}\right)^2.
     \end{split}
     \end{equation}
     Equivalently,
     \begin{equation}\label{L1+L2}
     \begin{split}
          &\frac{V((L_1+L_2)[2],K[n-2])}{V((L_1+L_2)[1],K[n-1])}-\frac{V(L_1[2],K[n-2])}{V(L_1[1],K[n-1])}-\frac{V(L_2[2],K[n-2])}{V(L_2[1],K[n-1])}\\
         \geq\,& \frac{\lambda_2(-L_K)-1}{n V((L_1+L_2)[1],K[n-1])}\left(\frac{\delta_2^{S_2 K}(\bar{L}_1[K],\bar{L}_2[K])}{V(K)}\right)^2.
     \end{split}
     \end{equation}
     In particular, when $K=B$, then (\ref{L1+L2}) becomes
    \begin{equation}
     \begin{split}
         & W_{n-2}(L_1+L_2)-W_{n-1}(L_1+L_2)\left(\frac{W_{n-2}(L_1)}{W_{n-1}(L_1)}+\frac{W_{n-2}(L_2)}{W_{n-1}(L_2)}\right) \\
         \geq\,& \frac{n+1}{n(n-1)}\left(\frac{\delta_2^{\mu}(\bar{L}_1[B],\bar{L}_2[B])}{V(B)}\right)^2.
     \end{split}
     \end{equation}
\end{thm}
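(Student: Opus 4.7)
The plan is to apply Lemma \ref{lem-sta} to the test function
$$
\hat f := \frac{h_{L_1}}{c_1\, h_K} - \frac{h_{L_2}}{c_2\, h_K}, \qquad c_i := \frac{V(L_i[1], K[n-1])}{V(K)}, \quad i = 1, 2.
$$
The identities in (\ref{Wk}) give $\int_{\mathbb{S}^{n-1}} \hat f\, dV_K = 0$, so the constant $c_{\hat f}$ in Lemma \ref{lem-sta} vanishes. Writing $L_K = \tilde L_K - \mrm{Id}$ and using the self-adjointness of $\tilde L_K$ on $L^2(V_K)$ (Theorem \ref{thm-LK}(1)), I would polarize the identity $\int (h_L/h_K)\, \tilde L_K(h_L/h_K)\, dV_K = V(K[n-2], L[2])$ from (\ref{Wk}) to obtain the off-diagonal formula $\int (h_{L_1}/h_K)\, \tilde L_K(h_{L_2}/h_K)\, dV_K = V(L_1, L_2, K[n-2])$. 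Expanding the quadratic form then yields
$$
-\int \hat f\, \tilde L_K \hat f\, dV_K = V(K)^2 \left[\frac{2V(L_1, L_2, K[n-2])}{V(L_1, K[n-1])\, V(L_2, K[n-1])} - \sum_{i=1}^{2}\frac{V(L_i[2], K[n-2])}{V(L_i, K[n-1])^2}\right],
$$
which is the left-hand side of Lemma \ref{lem-sta}.

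For the stability remainder, linearity identifies the corrective vector as $v_{\hat f} = v_1/c_1 - v_2/c_2$, where $v_i$ is the unique vector satisfying the integral equation defining $\bar L_i[K]$ in Section \ref{sec:1}. A direct computation then gives $\hat f - \ell_{v_{\hat f}}^K = (h_{\bar L_1[K]} - h_{\bar L_2[K]})/h_K$, and the change of variable $dV_K/h_K^2 = \tfrac{1}{n}\, dS_2 K$ produces
$$
\delta_2^{V_K}\!\left(\hat f,\, \ell_{v_{\hat f}}^K\right)^{\!2} = \frac{1}{n}\, \delta_2^{S_2 K}\!\left(\bar L_1[K],\, \bar L_2[K]\right)^{\!2}.
$$
Substituting into Lemma \ref{lem-sta} gives the ``ratio form'' (Theorem \ref{thm-sta-L12K} of Section \ref{sec:4}).

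To reach the displayed form (\ref{L1+L2-0}), one rearranges algebraically using bilinearity of mixed volumes: $V((L_1+L_2)[2], K[n-2]) = V(L_1[2], K[n-2]) + 2V(L_1, L_2, K[n-2]) + V(L_2[2], K[n-2])$ together with $V((L_1+L_2)[1], K[n-1]) = V(L_1, K[n-1]) + V(L_2, K[n-1])$. The $K = B$ specialization then follows by using $\lambda_2(-L_B) = 2n/(n-1)$, so $\lambda_2(-L_B) - 1 = (n+1)/(n-1)$ (Theorem \ref{thm-LK}(3)), together with $S_2 B = \mu$, after multiplying through by $W_{n-1}(L_1 + L_2)$ to clear $V((L_1+L_2)[1], B[n-1])$ from the denominator on the right.

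The main obstacle is the polarization step: one must firmly establish $\int (h_{L_1}/h_K)\, \tilde L_K(h_{L_2}/h_K)\, dV_K = V(L_1, L_2, K[n-2])$ from the self-adjointness of $\tilde L_K$ and (\ref{Wk}), since without this identification the cross-term $V(L_1, L_2, K[n-2])$ that drives the left-hand side cannot be extracted. Once this symmetric-bilinear identity is in hand, identifying $v_{\hat f}$, assembling $\hat f - \ell_{v_{\hat f}}^K$ into $(h_{\bar L_1[K]} - h_{\bar L_2[K]})/h_K$, and converting the $V_K$-distance to an $S_2 K$-distance are routine bookkeeping.
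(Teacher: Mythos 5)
Your proposal is correct and follows essentially the same route as the paper: same test function $\hat f = \tfrac{h_{L_1}}{c_1 h_K} - \tfrac{h_{L_2}}{c_2 h_K}$ with $c_i = V(L_i[1],K[n-1])/V(K)$, same verification that $\int \hat f\, dV_K = 0$, same quadratic-form expansion producing the cross-term $V(L_1,L_2,K[n-2])$, same identification $v_{\hat f} = v_1/c_1 - v_2/c_2$ and rewriting $\hat f - \ell_{v_{\hat f}}^K = (h_{\bar L_1[K]} - h_{\bar L_2[K]})/h_K$, and the same change of measure $dV_K/h_K^2 = \tfrac1n\,dS_2K$ before invoking Lemma \ref{lem-sta}. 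The only organizational difference is that the paper first runs the computation against the mixed operator $L_{\mathcal C}$ with a general $(n-2)$-tuple $\mathcal C$ to recover Schneider's classical inequality via Lemma \ref{key-lem-AF}, and then specializes $\mathcal C = (K,\dots,K)$ to obtain the stability term, whereas you go directly to the $K$-only case; also, the cross-term identity $\int \tfrac{h_{L_1}}{h_K}\tilde L_K(\tfrac{h_{L_2}}{h_K})\,dV_K = V(L_1,L_2,K[n-2])$, which you obtain by polarization, is in the paper simply a direct unwinding of the definition of $\tilde L_{\mathcal C}$ (it is recorded in Section~\ref{sec:2} as $V(f,g,\mathcal C) = \int \tfrac{f}{h_{K_1}}\tilde L_{\mathcal C}(\tfrac{g}{h_{K_1}})\,dV_{\mathcal C}$), so there is no real obstacle there.
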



\begin{rem}
    For the origin-symmetric case, the test function $\hat{f}$ is even. Then the inequalities in Theorem \ref{thm-sta-L12K} hold with $\lambda_2$ replaced by $\lambda_{1,e}$.
\end{rem}

\subsection{Further discussion}$\ $

By applying Lemma \ref{lem-rev} to the test functions from the previous subsections, we can derive reversed forms of the corresponding inequalities. In this subsection, we focus on the planar case. Let $K \in \mathcal{K}_+^2$ be a Wulff shape in $\mathbb{R}^2$ with $\gamma = h_K$. For any $L \in \mathcal{K}_+^2$, the anisotropic Gauss map $\nu_\gamma : \partial L \to \partial K$ is defined by  
\begin{align*}
    \nu_\gamma(X) = h_K(\nu(X))\nu(X) + \nabla h_K(\nu(X)), \quad  X \in \partial L,
\end{align*} 
where $\nu:\partial L\to \mathbb{S}^1$ is the Gauss map of $\partial L$. The anisotropic Weingarten map is then the linear operator  
\begin{align*}
    W_\gamma = d\nu_\gamma = D^2 h_K \circ W,
\end{align*} 
where $W = d\nu$ is the Weingarten map of $\partial L$. The eigenvalue of $W_\gamma$, denoted by $\kappa_{L,K}=\frac{\kappa_L}{\kappa_K}$, is called the anisotropic principal curvature of $\partial L$. Let $d\mu_{\partial L}$ denote the surface area measure on $\partial L$. We define the anisotropic surface area measure as $d\mu_{\partial L;K} = h_K(\nu) d\mu_{\partial L}$.

Setting $f = \frac{h_L}{h_K}$, a direct computation yields  
\begin{align*}
    \int_{\mathbb{S}^1} (\tilde{L}_K f)^2 \, dV_K 
= \frac{1}{2} \int_{\mathbb{S}^1} h_K \frac{\kappa_K}{\kappa_L^2} \, d\mu 
= \frac{1}{2} \int_{\partial L} \frac{1}{\kappa_{L,K}} \, d\mu_{\partial L;K}.
\end{align*}
By Lemma \ref{lem-rev}, we obtain
\begin{thm}
Let $K, L \in \mathcal{K}_+^2$. Then  
\begin{align}\label{an-HK-M}
    \frac{1}{2} \int_{\partial L} \frac{1}{\kappa_{L,K}} \, d\mu_{\partial L;K} - V(L) \geq \lambda_2(-L_K) \left( \frac{V(K, L)^2}{V(K)} - V(L) \right).
\end{align}
When $K = B$, this reduces to  
\begin{align}\label{HK-M}
    \frac{1}{2} \int_{\partial L} \frac{1}{\kappa_L} \, d\mu_{\partial L} - V(L) \geq 4 \left( \frac{|\partial L|^2}{4\pi} - V(L) \right).
\end{align} 
\end{thm}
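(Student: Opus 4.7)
The plan is to apply Lemma \ref{lem-rev} to the test function $f = h_L/h_K$ in the planar case $n = 2$, and then identify each of the three terms appearing in that lemma with a geometric quantity in the statement. Since Lemma \ref{lem-rev} involves $\int f(-L_K f) dV_K$, $\int f^2 dV_K$, $\int f\, dV_K$, and $\int (L_K f + f)^2 dV_K$, I would evaluate each in turn using the identities already collected in the excerpt.

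The first step is to compute the left-hand side of Lemma \ref{lem-rev}. From the identities (\ref{Wk}) specialized to $n = 2$, one has $\int_{\mathbb{S}^1} f\, dV_K = V(K,L)$ and $\int_{\mathbb{S}^1} f \,\tilde{L}_K f \, dV_K = V(K[0],L[2]) = V(L)$. Since $-L_K = \mathrm{Id} - \tilde{L}_K$, combining these gives
\begin{align*}
\int_{\mathbb{S}^1} f(-L_K f)\, dV_K - \int_{\mathbb{S}^1} f^2\, dV_K = -\int_{\mathbb{S}^1} f\,\tilde{L}_K f \, dV_K = -V(L),
\end{align*}
so the LHS of Lemma \ref{lem-rev} equals $V(K,L)^2/V(K) - V(L)$.

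Next, for the right-hand side, note that $L_K f + f = \tilde{L}_K f$, so $\int_{\mathbb{S}^1}(L_K f + f)^2 dV_K = \int_{\mathbb{S}^1}(\tilde{L}_K f)^2 dV_K$, and the computation just before the theorem identifies this as $\tfrac{1}{2}\int_{\partial L}\frac{1}{\kappa_{L,K}}\, d\mu_{\partial L;K}$. Lemma \ref{lem-rev} therefore reads
\begin{align*}
\frac{V(K,L)^2}{V(K)} - V(L) \;\leq\; \frac{1}{\lambda_2(-L_K)-1}\left(\frac{1}{2}\int_{\partial L}\frac{1}{\kappa_{L,K}}\, d\mu_{\partial L;K} - \frac{V(K,L)^2}{V(K)}\right).
\end{align*}
Multiplying through by $\lambda_2(-L_K) - 1 > 0$ and collecting the $V(K,L)^2/V(K)$ terms yields (\ref{an-HK-M}) directly.

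Finally, for the reduction to $K = B$, I would substitute the explicit values: by Theorem \ref{thm-LK}(3) with $n = 2$ and $l = 2$ one has $\lambda_2(-L_B) = 4$; the anisotropic principal curvature becomes $\kappa_{L,B} = \kappa_L$ since $\kappa_B \equiv 1$; the weighted surface measure reduces to $d\mu_{\partial L;B} = d\mu_{\partial L}$; and $V(B,L) = \tfrac{1}{2}\int_{\mathbb{S}^1}(h_L'' + h_L)\, d\mu = \tfrac{1}{2}|\partial L|$, so $V(B,L)^2/V(B) = |\partial L|^2/(4\pi)$. Inserting these into (\ref{an-HK-M}) produces (\ref{HK-M}). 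There is no real conceptual obstacle here, as the hard analytic content is already packaged in Lemma \ref{lem-rev} and in the computation of $\int(\tilde{L}_K f)^2 dV_K$ via $\tilde{L}_K f = \kappa_K/\kappa_L$ and $dV_K = \tfrac{h_K}{2\kappa_K}\, d\mu$; the only care needed is bookkeeping in the planar mixed-volume identities and the final algebraic rearrangement.
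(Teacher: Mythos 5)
Your proof is correct and follows essentially the same route as the paper: apply Lemma~\ref{lem-rev} to $f = h_L/h_K$ in the planar case, use $\tilde{L}_K f = \kappa_K/\kappa_L$ together with the identities in (\ref{Wk}) to identify $\int f\,dV_K = V(K,L)$, $\int f\,\tilde{L}_K f\,dV_K = V(L)$, and $\int (\tilde{L}_K f)^2\,dV_K = \tfrac12\int_{\partial L}\kappa_{L,K}^{-1}\,d\mu_{\partial L;K}$, and then rearrange. The specialization to $K=B$ via $\lambda_2(-L_B)=4$, $\kappa_B\equiv 1$, and $V(B,L)=\tfrac12|\partial L|$ is also exactly as in the paper.
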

\begin{rem}
    The inequality (\ref{an-HK-M}) provides a direct comparison between the anisotropic Heintze-Karcher inequality (see \cite{HLMG09}) and the Minkowski inequality in $\mathbb{R}^2$. In particular, (\ref{HK-M}) is the Lin-Tsai inequality \cite[Lemma 1.7]{LT12}.
\end{rem}

$\ $

\begin{bibdiv}
\begin{biblist}

\bib{Alek38}{article}{
   author={Aleksandrov, A. D.},
   title={On the theory of mixed volumes of convex bodies. III. Extension of two theorems of Minkowski on convex polyhedra to arbitrary convex bodies},
   journal={Mat. Sb.},
   volume={3},
   date={1938},
   pages={27--46},
}

\bib{An97}{article}{
   author={Andrews, Ben},
   title={Monotone quantities and unique limits for evolving convex
   hypersurfaces},
   journal={Internat. Math. Res. Notices},
   date={1997},
   number={20},
   pages={1001--1031},
}

\bib{And99}{article}{
   author={Andrews, Ben},
   title={Gauss curvature flow: the fate of the rolling stones},
   journal={Invent. Math.},
   volume={138},
   date={1999},
   number={1},
   pages={151--161},
}

\bib{And03}{article}{
   author={Andrews, Ben},
   title={Classification of limiting shapes for isotropic curve flows},
   journal={J. Amer. Math. Soc.},
   volume={16},
   date={2003},
   number={2},
   pages={443--459},
}

\bib{ACGL20}{book}{
   author={Andrews, Ben},
   author={Chow, Bennett},
   author={Guenther, Christine},
   author={Langford, Mat},
   title={Extrinsic geometric flows},
   series={Graduate Studies in Mathematics},
   volume={206},
   publisher={American Mathematical Society, Providence, RI},
   date={2020},
   pages={xxviii+759},
}

\bib{AGN16}{article}{
   author={Andrews, Ben},
   author={Guan, Pengfei},
   author={Ni, Lei},
   title={Flow by powers of the Gauss curvature},
   journal={Adv. Math.},
   volume={299},
   date={2016},
   pages={174--201},
}

\bib{BBCY19}{article}{
   author={Bianchi, Gabriele},
   author={B\"{o}r\"{o}czky, K\'{a}roly J.},
   author={Colesanti, Andrea},
   author={Yang, Deane},
   title={The $L_p$-Minkowski problem for $-n<p<1$},
   journal={Adv. Math.},
   volume={341},
   date={2019},
   pages={493--535},
}

\bib{BF87}{book}{
   author={Bonnesen, T.},
   author={Fenchel, W.},
   title={Theory of convex bodies},
   note={Translated from the German and edited by L. Boron, C. Christenson
   and B. Smith},
   publisher={BCS Associates, Moscow, ID},
   date={1987},
   pages={x+172},
}

\bib{Bor23}{article}{
   author={B\"{o}r\"{o}czky, K\'{a}roly J.},
   title={The logarithmic Minkowski conjecture and the $L_p$-Minkowski
   problem},
   conference={
      title={Harmonic analysis and convexity},
   },
   book={
      series={Adv. Anal. Geom.},
      volume={9},
      publisher={De Gruyter, Berlin},
   },
   date={[2023] \copyright 2023},
   pages={83--118},
}

\bib{BLYZ13}{article}{
   author={B\"{o}r\"{o}czky, K\'{a}roly J.},
   author={Lutwak, Erwin},
   author={Yang, Deane},
   author={Zhang, Gaoyong},
   title={The logarithmic Minkowski problem},
   journal={J. Amer. Math. Soc.},
   volume={26},
   date={2013},
   number={3},
   pages={831--852},
}

\bib{BS24}{article}{
   author={B\"{o}r\"{o}czky, K\'{a}roly J.},
   author={Saroglou, Christos},
   title={Uniqueness when the $L_p$ curvature is close to be a constant for
   $p\in [0,1)$},
   journal={Calc. Var. Partial Differential Equations},
   volume={63},
   date={2024},
   number={6},
   pages={Paper No. 154, 26},
}

\bib{BT17}{article}{
	author={B\"{o}r\"{o}czky, K\'{a}roly J.},
	author={Trinh, Hai T.},
	title={The planar {$L_p$}-{M}inkowski problem for {$0<p<1$}},
	journal={Adv. in Appl. Math.},
	volume={87},
    pages={58--81},
    year={2017}
}

\bib{BCD17}{article}{
   author={Brendle, Simon},
   author={Choi, Kyeongsu},
   author={Daskalopoulos, Panagiota},
   title={Asymptotic behavior of flows by powers of the Gaussian curvature},
   journal={Acta Math.},
   volume={219},
   date={2017},
   number={1},
   pages={1--16},
}

\bib{CH25}{article}{
   author={Cabezas-Moreno, Carlos},
   author={Hu, Jinrong},
   title={The $L_p$ dual Christoffel-Minkowski problem for $1< p<
   q\leq k+1$ with $1\leq k\leq n$},
   journal={Calc. Var. Partial Differential Equations},
   volume={64},
   date={2025},
   number={7},
   pages={Paper No. 229},
}

\bib{Caf90}{article}{
   author={Caffarelli, L. A.},
   title={A localization property of viscosity solutions to the Monge-Amp\`ere
   equation and their strict convexity},
   journal={Ann. of Math. (2)},
   volume={131},
   date={1990},
   number={1},
   pages={129--134},
}

\bib{CHLZ23}{article}{
   author={Chen, Shibing},
   author={Hu, Shengnan},
   author={Liu, Weiru},
   author={Zhao, Yiming},
   title={On the planar Gaussian-Minkowski problem},
   journal={Adv. Math.},
   volume={435},
   date={2023},
   number={part A},
   part={part A},
   pages={Paper No. 109351, 32},
}

\bib{Ch21}{article}{
   author={Chen, Yuansi},
   title={An almost constant lower bound of the isoperimetric coefficient in
   the KLS conjecture},
   journal={Geom. Funct. Anal.},
   volume={31},
   date={2021},
   number={1},
   pages={34--61},
}

\bib{CY76}{article}{
   author={Cheng, Shiu Yuen},
   author={Yau, Shing Tung},
   title={On the regularity of the solution of the $n$-dimensional Minkowski
   problem},
   journal={Comm. Pure Appl. Math.},
   volume={29},
   date={1976},
   number={5},
   pages={495--516},
}

\bib{CW06}{article}{
   author={Chou, Kai-Seng},
   author={Wang, Xu-Jia},
   title={The $L_p$-Minkowski problem and the Minkowski problem in
   centroaffine geometry},
   journal={Adv. Math.},
   volume={205},
   date={2006},
   number={1},
   pages={33--83},
}

\bib{Du21}{article}{
   author={Du, Shi-Zhong},
   title={On the planar $L_ p$-Minkowski problem},
   journal={J. Differential Equations},
   volume={287},
   date={2021},
   pages={37--77},
}

\bib{Du25}{article}{
    author={Du, Shi-Zhong},
    title={Uniqueness of $L_p$ Minkowski problem in the supercritical range},
	year={2025},
    eprint={arXiv:2504.06946},
	archivePrefix={arXiv},
	primaryClass={math.AP}
}

\bib{FJ38}{article}{
   author={Fenchel, W.},
   author={Jessen, B.},
   title={Mengenfunktionen und konvexe K\"{o}rper},
   journal={Danske Vid. Selskab. Mat.-fys. Medd.},
   volume={16},
   date={1938},
   pages={1--31},
}

\bib{Fir74}{article}{
   author={Firey, William J.},
   title={Shapes of worn stones},
   journal={Mathematika},
   volume={21},
   date={1974},
   pages={1--11},
}

\bib{GMTZ10}{article}{
	author={Guan, Pengfei},
	author={Ma, Xi-Nan},
	author={Trudinger, Neil},
	author={Zhu, Xiaohua},
	title={A form of {A}lexandrov-{F}enchel inequality},
	journal={Pure Appl. Math. Q.},
	volume = {6},
	number = {4, Special Issue: In honor of Joseph J. Kohn. Part 2},
	pages = {999--1012},
	year = {2010}
}

\bib{GLW2203}{article}{
	author={Guang, Qiang},
    author={Li, Qi-rui},
    author={Wang, Xu-jia},
	title={The $L_p$-Minkowski problem with super-critical exponents},
	year={2022},
    eprint={arXiv:2203.05099},
	archivePrefix={arXiv},
	primaryClass={math.AP}
}

\bib{HLW16}{article}{
   author={He, Yan},
   author={Li, Qi-Rui},
   author={Wang, Xu-Jia},
   title={Multiple solutions of the $L_p$-Minkowski problem},
   journal={Calc. Var. Partial Differential Equations},
   volume={55},
   date={2016},
   number={5},
   pages={Art. 117, 13},
}

\bib{HLMG09}{article}{
   author={He, Yijun},
   author={Li, Haizhong},
   author={Ma, Hui},
   author={Ge, Jianquan},
   title={Compact embedded hypersurfaces with constant higher order
   anisotropic mean curvatures},
   journal={Indiana Univ. Math. J.},
   volume={58},
   date={2009},
   number={2},
   pages={853--868},
}

\bib{HI2401}{article}{
   author={Hu, Yingxiang},
   author={Ivaki, Mohammad N.},
   title={On the uniqueness of solutions to the isotropic $L_p$ dual
   Minkowski problem},
   journal={Nonlinear Anal.},
   volume={241},
   date={2024},
   pages={Paper No. 113493, 6},
}

\bib{HI2408}{article}{
   author={Hu, Yingxiang},
   author={Ivaki, Mohammad N.},
   title={Stability of the cone-volume measure with near constant density},
   journal={Int. Math. Res. Not. IMRN},
   date={2025},
   number={6},
   pages={Paper No. rnaf062, 9},
}

\bib{HLYZ05}{article}{
   author={Hug, Daniel},
   author={Lutwak, Erwin},
   author={Yang, Deane},
   author={Zhang, Gaoyong},
   title={On the $L_p$ Minkowski problem for polytopes},
   journal={Discrete Comput. Geom.},
   volume={33},
   date={2005},
   number={4},
   pages={699--715},
}

\bib{Iv22}{article}{
   author={Ivaki, Mohammad N.},
   title={On the stability of the $L_p$-curvature},
   journal={J. Funct. Anal.},
   volume={283},
   date={2022},
   number={11},
   pages={Paper No. 109684, 16},
}

\bib{IM23}{article}{
   author={Ivaki, Mohammad N.},
   author={Milman, Emanuel},
   title={Uniqueness of solutions to a class of isotropic curvature
   problems},
   journal={Adv. Math.},
   volume={435},
   date={2023},
   number={part A},
   part={part A},
   pages={Paper No. 109350, 11},
}

\bib{IM24}{article}{
   author={Ivaki, Mohammad N.},
   author={Milman, Emanuel},
   title={$L^p$-Minkowski problem under curvature pinching},
   journal={Int. Math. Res. Not. IMRN},
   date={2024},
   number={10},
   pages={8638--8652},
}

\bib{JLW15}{article}{
   author={Jian, Huaiyu},
   author={Lu, Jian},
   author={Wang, Xu-Jia},
   title={Nonuniqueness of solutions to the $L_p$-Minkowski problem},
   journal={Adv. Math.},
   volume={281},
   date={2015},
   pages={845--856},
}

\bib{JLZ16}{article}{
   author={Jian, Huaiyu},
   author={Lu, Jian},
   author={Zhu, Guangxian},
   title={Mirror symmetric solutions to the centro-affine Minkowski problem},
   journal={Calc. Var. Partial Differential Equations},
   volume={55},
   date={2016},
   number={2},
   pages={Art. 41, 22},
}

\bib{KL22}{article}{
   author={Klartag, Bo'az},
   author={Lehec, Joseph},
   title={Bourgain's slicing problem and KLS isoperimetry up to polylog},
   journal={Geom. Funct. Anal.},
   volume={32},
   date={2022},
   number={5},
   pages={1134--1159},
}

\bib{KM22}{article}{
   author={Kolesnikov, Alexander V.},
   author={Milman, Emanuel},
   title={Local $L^p$-Brunn-Minkowski inequalities for $p<1$},
   journal={Mem. Amer. Math. Soc.},
   volume={277},
   date={2022},
   number={1360},
   pages={v+78},
}

\bib{Kw21}{article}{
    author={Kwong, Kwok-Kun},
	title={Higher order Poincare inequalities and Minkowski-type inequalities},
	year={2021},
    eprint={arXiv:2103.10627},
	archivePrefix={arXiv},
	primaryClass={math.DG}
}

\bib{lewy38}{article}{
   author={Lewy, Hans},
   title={On differential geometry in the large. {I}. {M}inkowski's
              problem},
   journal={Trans. Amer. Math. Soc.},
   volume={43},
   date={1938},
   number={2},
   pages={258--270},
}

\bib{LYZ04}{article}{
	author={Lutwak, Erwin},
	author={Yang, Deane},
	author={Zhang, Gaoyong},
	title={On the {$L_p$}-{M}inkowski problem},
	journal={Trans. Amer. Math. Soc.},
	number={11},
	volume={356},
    pages={4359--4370},
    year={2004}
}  

\bib{LW22}{article}{
    author={Li, Haizhong},
	author={Wan, Yao},
	title={Classification of solutions for the planar isotropic $L_p$ dual Minkowski problem},
	year={2022},
    eprint={arXiv:2209.14630},
	archivePrefix={arXiv},
	primaryClass={math.DG}
}

\bib{LW24}{article}{
   author={Li, Haizhong},
   author={Wan, Yao},
   title={Uniqueness of solutions to some classes of anisotropic and
   isotropic curvature problems},
   journal={J. Funct. Anal.},
   volume={287},
   date={2024},
   number={3},
   pages={Paper No. 110471, 30},
}

\bib{LT12}{article}{
   author={Lin, Yu-Chu},
   author={Tsai, Dong-Ho},
   title={Application of Andrews and Green-Osher inequalities to nonlocal
   flow of convex plane curves},
   journal={J. Evol. Equ.},
   volume={12},
   date={2012},
   number={4},
   pages={833--854},
}

\bib{LW13}{article}{
   author={Lu, Jian},
   author={Wang, Xu-Jia},
   title={Rotationally symmetric solutions to the $L_p$-Minkowski problem},
   journal={J. Differential Equations},
   volume={254},
   date={2013},
   number={3},
   pages={983--1005},
}

\bib{Lut93}{article}{
   author={Lutwak, Erwin},
   title={The Brunn-Minkowski-Firey theory. I. Mixed volumes and the
   Minkowski problem},
   journal={J. Differential Geom.},
   volume={38},
   date={1993},
   number={1},
   pages={131--150},
}

\bib{Lut96}{article}{
	author={Lutwak, Erwin},
	title={The {B}runn-{M}inkowski-{F}irey theory. {II}. {A}ffine and
              geominimal surface areas},
	journal={Adv. Math.},
	number={2},
	volume={118},
    pages={244--294},
    year={1996}
}  

\bib{LYZ00}{article}{
   author={Lutwak, Erwin},
   author={Yang, Deane},
   author={Zhang, Gaoyong},
   title={A new ellipsoid associated with convex bodies},
   journal={Duke Math. J.},
   volume={104},
   date={2000},
   number={3},
   pages={375--390},
}

\bib{LYZ05}{article}{
   author={Lutwak, Erwin},
   author={Yang, Deane},
   author={Zhang, Gaoyong},
   title={$L_p$ John ellipsoids},
   journal={Proc. London Math. Soc. (3)},
   volume={90},
   date={2005},
   number={2},
   pages={497--520},
}

\bib{Mi24}{article}{
   author={Milman, Emanuel},
   title={A sharp centro-affine isospectral inequality of Szeg\"{o}-Weinberger
   type and the $L^p$-Minkowski problem},
   journal={J. Differential Geom.},
   volume={127},
   date={2024},
   number={1},
   pages={373--408},
}

\bib{Mi25}{article}{
   author={Milman, Emanuel},
   title={Centro-affine differential geometry and the log-Minkowski problem},
   journal={J. Eur. Math. Soc. (JEMS)},
   volume={27},
   date={2025},
   number={2},
   pages={709--772},
}

\bib{Min1897}{article}{
   author={Minkowski, Hermann},
   title={Allgemeine Lehrs{\"a}tze {\"u}ber die konvexen Polyeder},
   journal={Nachr. Ges. Wiss. G{\"o}ttingen},
   date={1897},
	pages={198--219},
}

\bib{Min1903}{article}{
   author={Minkowski, Hermann},
   title={Volumen und {O}berfl\"{a}che},
   journal={Math. Ann.},
   volume={57},
   number={4},
   date={1903},
   pages={447--495},
}

\bib{Nir53}{article}{
    author={Nirenberg, Louis},
    title={The {W}eyl and {M}inkowski problems in differential geometry
              in the large},
    journal={Comm. Pure Appl. Math.},
    volume = {6},
    pages = {337--394},
    date = {1953}
}

\bib{Pog78}{book}{
    author={Pogorelov, A. V.},
    title={The {M}inkowski multidimensional problem},
    SERIES = {Scripta Series in Mathematics},
     PUBLISHER = {V. H. Winston \& Sons, Washington, D.C.; Halsted Press [John Wiley \& Sons], New York-Toronto-London},
    pages = {106},
    date = {1978}
}

\bib{Sa22}{article}{
   author={Saroglou, Christos},
   title={On a non-homogeneous version of a problem of Firey},
   journal={Math. Ann.},
   volume={382},
   date={2022},
   number={3-4},
   pages={1059--1090},
}

\bib{Sch14}{book}{
   author={Schneider, Rolf},
   title={Convex bodies: the Brunn-Minkowski theory},
   series={Encyclopedia of Mathematics and its Applications},
   volume={151},
   edition={Second expanded edition},
   publisher={Cambridge University Press, Cambridge},
   date={2014},
   pages={xxii+736},
}

\bib{SvH19}{article}{
   author={Shenfeld, Yair},
   author={van Handel, Ramon},
   title={Mixed volumes and the Bochner method},
   journal={Proc. Amer. Math. Soc.},
   volume={147},
   date={2019},
   number={12},
   pages={5385--5402},
}

\bib{vH23}{article}{
   author={van Handel, Ramon},
   title={The local logarithmic Brunn-Minkowski inequality for zonoids},
   conference={
      title={Geometric aspects of functional analysis},
   },
   book={
      series={Lecture Notes in Math.},
      volume={2327},
      publisher={Springer, Cham},
   },
   date={[2023] \copyright 2023},
   pages={355--379},
}

\bib{Zhu14}{article}{
   author={Zhu, Guangxian},
   title={The logarithmic Minkowski problem for polytopes},
   journal={Adv. Math.},
   volume={262},
   date={2014},
   pages={909--931},
}
    
\end{biblist}
\end{bibdiv}

\end{document}